\newcommand {\Omit}[1]{}
\tikzset{
>=stealth',
help lines/.style={dashed, thick},
axis/.style={<->},
important line/.style={thick},
connection/.style={thick, dotted},
}
\newlength{\hatchspread}
\newlength{\hatchthickness}
\tikzset{hatchspread/.code={\setlength{\hatchspread}{#1}},
         hatchthickness/.code={\setlength{\hatchthickness}{#1}}}
\tikzset{hatchspread=3pt,
         hatchthickness=0.4pt}
\pgfqpoint{\hatchspread}{\hatchspread}}
\newcommand{\nc}{\newcommand}
\nc{\rnc}{\renewcommand}
\nc{\bb}[1]{{\mathbb #1}}
\nc{\bbA}{\bb{A}}\nc{\bbB}{\bb{B}}\nc{\bbC}{\bb{C}}\nc{\bbD}{\bb{D}}
\nc{\bbE}{\bb{E}}\nc{\bbF}{\bb{F}}\nc{\bbG}{\bb{G}}\nc{\bbH}{\bb{H}}
\nc{\bbI}{\bb{I}}\nc{\bbJ}{\bb{J}}\nc{\bbK}{\bb{K}}\nc{\bbL}{\bb{L}}
\nc{\bbM}{\bb{M}}\nc{\bbN}{\bb{N}}\nc{\bbO}{\bb{O}}\nc{\bbP}{\bb{P}}
\nc{\bbQ}{\bb{Q}}\nc{\bbR}{\bb{R}}\nc{\bbS}{\bb{S}}\nc{\bbT}{\bb{T}}
\nc{\bbU}{\bb{U}}\nc{\bbV}{\bb{V}}\nc{\bbW}{\bb{W}}\nc{\bbX}{\bb{X}}
\nc{\bbY}{\bb{Y}}\nc{\bbZ}{\bb{Z}}
\nc{\mbf}[1]{{\mathbf #1}}
\nc{\bfA}{\mbf{A}}\nc{\bfB}{\mbf{B}}\nc{\bfC}{\mbf{C}}\nc{\bfD}{\mbf{D}}
\nc{\bfE}{\mbf{E}}\nc{\bfF}{\mbf{F}}\nc{\bfG}{\mbf{G}}\nc{\bfH}{\mbf{H}}
\nc{\bfI}{\mbf{I}}\nc{\bfJ}{\mbf{J}}\nc{\bfK}{\mbf{K}}\nc{\bfL}{\mbf{L}}
\nc{\bfM}{\mbf{M}}\nc{\bfN}{\mbf{N}}\nc{\bfO}{\mbf{O}}\nc{\bfP}{\mbf{P}}
\nc{\bfQ}{\mbf{Q}}\nc{\bfR}{\mbf{R}}\nc{\bfS}{\mbf{S}}\nc{\bfT}{\mbf{T}}
\nc{\bfU}{\mbf{U}}\nc{\bfV}{\mbf{V}}\nc{\bfW}{\mbf{W}}\nc{\bfX}{\mbf{X}}
\nc{\bfY}{\mbf{Y}}\nc{\bfZ}{\mbf{Z}}
\nc{\bfa}{\mbf{a}}\nc{\bfb}{\mbf{b}}\nc{\bfc}{\mbf{c}}\nc{\bfd}{\mbf{d}}
\nc{\bfe}{\mbf{e}}\nc{\bff}{\mbf{f}}\nc{\bfg}{\mbf{g}}\nc{\bfh}{\mbf{h}}
\nc{\bfi}{\mbf{i}}\nc{\bfj}{\mbf{j}}\nc{\bfk}{\mbf{k}}\nc{\bfl}{\mbf{l}}
\nc{\bfm}{\mbf{m}}\nc{\bfn}{\mbf{n}}\nc{\bfo}{\mbf{o}}\nc{\bfp}{\mbf{p}}
\nc{\bfq}{\mbf{q}}\nc{\bfr}{\mbf{r}}\nc{\bfs}{\mbf{s}}\nc{\bft}{\mbf{t}}
\nc{\bfu}{\mbf{u}}\nc{\bfv}{\mbf{v}}\nc{\bfw}{\mbf{w}}\nc{\bfx}{\mbf{x}}
\nc{\bfy}{\mbf{y}}\nc{\bfz}{\mbf{z}}
\nc{\mcal}[1]{{\mathcal #1}}
\nc{\calA}{\mcal{A}}\nc{\calB}{\mcal{B}}\nc{\calC}{\mcal{C}}\nc{\calD}{\mcal{D}}
\nc{\calE}{\mcal{E}} \nc{\calF}{\mcal{F}}\nc{\calG}{\mcal{G}}\nc{\calH}{\mcal{H}}
\nc{\calI}{\mcal{I}}\nc{\calJ}{\mcal{J}}\nc{\calK}{\mcal{K}}\nc{\calL}{\mcal{L}}
\nc{\calM}{\mcal{M}}\nc{\calN}{\mcal{N}}\nc{\calO}{\mcal{O}}\nc{\calP}{\mcal{P}}
\nc{\calQ}{\mcal{Q}}\nc{\calR}{\mcal{R}}\nc{\calS}{\mcal{S}}\nc{\calT}{\mcal{T}}
\nc{\calU}{\mcal{U}}\nc{\calV}{\mcal{V}}\nc{\calW}{\mcal{W}}\nc{\calX}{\mcal{X}}
\nc{\calY}{\mcal{Y}}\nc{\calZ}{\mcal{Z}}
\nc{\fA}{\frak{A}}\nc{\fB}{\frak{B}}\nc{\fC}{\frak{C}} \nc{\fD}{\frak{D}}
\nc{\fE}{\frak{E}}\nc{\fF}{\frak{F}}\nc{\fG}{\frak{G}}\nc{\fH}{\frak{H}}
\nc{\fI}{\frak{I}}\nc{\fJ}{\frak{J}}\nc{\fK}{\frak{K}}\nc{\fL}{\frak{L}}
\nc{\fM}{\frak{M}}\nc{\fN}{\frak{N}}\nc{\fO}{\frak{O}}\nc{\fP}{\frak{P}}
\nc{\fQ}{\frak{Q}}\nc{\fR}{\frak{R}}\nc{\fS}{\frak{S}}
\nc{\fU}{\frak{U}}\nc{\fV}{\frak{V}}\nc{\fW}{\frak{W}}\nc{\fX}{\frak{X}}
\nc{\fY}{\frak{Y}}\nc{\fZ}{\frak{Z}}
\nc{\fa}{\frak{a}}\nc{\fb}{\frak{b}}\nc{\fc}{\frak{c}} \nc{\fd}{\frak{d}}
\nc{\fe}{\frak{e}}\nc{\fFf}{\frak{f}}\nc{\fg}{\frak{g}}\nc{\fh}{\frak{h}}
\nc{\fri}{\frak{i}}\nc{\fj}{\frak{j}}\nc{\fk}{\frak{k}}\nc{\fl}{\frak{l}}
\nc{\fm}{\frak{m}}\nc{\fn}{\frak{n}}\nc{\fo}{\frak{o}}\nc{\fp}{\frak{p}}
\nc{\fq}{\frak{q}}\nc{\fr}{\frak{r}}\nc{\fs}{\frak{s}}
\nc{\fu}{\frak{u}}\nc{\fv}{\frak{v}}\nc{\fw}{\frak{w}}\nc{\fx}{\frak{x}}
\nc{\fy}{\frak{y}}\nc{\fz}{\frak{z}}
\newtheorem{theorem}{Theorem}[section]
\newtheorem{prop}[theorem]{Proposition}
\theoremstyle{definition}
\newtheorem{definition}[theorem]{Definition}
\newtheorem{thm}{Theorem}
\newtheorem{cor}{Corollary}
 \DeclareMathOperator{\gr}{gr}
 \DeclareMathOperator{\Sym}{Sym}
  \DeclareMathOperator{\KM}{KM}
      \DeclareMathOperator{\sph}{sph}
\DeclareMathOperator{\cross}{cross}
\DeclareMathOperator{\inc}{in}
\DeclareMathOperator{\out}{out}
\DeclareMathOperator{\Res}{Res}
\DeclareMathOperator{\Sh}{Sh}
\newcommand{\surj}{\twoheadrightarrow}
\newcommand{\C}{\bbC}
\newcommand{\N}{\bbN}
\DeclareMathOperator{\fin}{fin}
\DeclareMathOperator{\fac}{fac}
\DeclareMathOperator{\ext}{ext}
\DeclareMathOperator{\coop}{coop}
\DeclareMathOperator{\loc}{loc}
\DeclareMathOperator{\SH}{SH}
 \gdef\Young(#1){\hbox{$\vcenter
 {\mathcode`,="8000\mathcode`|="8000
  \def,{\global\advance\cols by 1 &}%
  \def|{\cr
        \multispan{\the\cols}\hrulefill\cr
        &\global\cols=2 }%
  \offinterlineskip\everycr{}\tabskip=0pt
  \dimen0=\ht\strutbox \advance\dimen0 by \dp\strutbox
  \halign
   {\vrule height \ht\strutbox depth \dp\strutbox##
    &&\hbox to \dimen0{\hss$##$\hss}\vrule\cr
    \noalign{\hrule}&\global\cols=2 #1\crcr
    \multispan{\the\cols}\hrulefill\cr%
   }
 }$}}
\newcommand{\yaping}[1]{\textcolor{blue}{$[$ Yaping: #1 $]$}}
\begin{document}
\title{The PBW theorem for the affine Yangians}
\date{\today}

\author[Y.~Yang]{Yaping~Yang}
\address{School of Mathematics and Statistics, The University of Melbourne, 813 Swanston Street, Parkville VIC 3010, Australia}
\email{yaping.yang1@unimelb.edu.au}

\author[G.~Zhao]{Gufang~Zhao}
\address{Institute of Science and Technology Austria,
Am Campus, 1,
Klosterneuburg 3400,
Austria}
\email{gufang.zhao@ist.ac.at}

\keywords{Yangian, PBW theorem, shuffle algebra, Drinfeld double}

\begin{abstract}
We prove that the Yangian associated to an untwisted symmetric affine Kac-Moody Lie algebra is isomorphic to the Drinfeld double of a shuffle algebra. The latter is constructed in \cite{YZ1} as an algebraic formalism of the cohomological Hall algebras. As a consequence, we obtain the Poincare-Birkhoff-Witt (PBW) theorem for this class of affine Yangians. Another independent proof of the PBW theorem is given recently by Guay-Regelskis-Wendlandt \cite{GW}. 
\end{abstract}
\thanks{We are grateful to Hiraku Nakajima for the helpful suggestions, and to
 Nicolas Guay and Curtis Wendlandt for the useful discussions and communications. 
The research of G.Z. at IST Austria, Hausel group, is supported by
the Advanced Grant Arithmetic and Physics of Higgs moduli spaces No. 320593 of the European Research Council.
}

\maketitle

\section*{Introduction}
For Yangians of a semi-simple Lie algebra, the PBW theorem is well-known. It has been conjectural that 
the PBW theorem holds for a much larger class of Yangians including the affine Yangians.

On the other hand, the PBW theorem for the (undeformed) current algebra of an affine Lie algebra has been proved by Enriquez \cite{E}. In the proof of {\it loc. cit.}, a shuffle algebra description and a duality of the current algebra play the key roles. 

In the framework of cohomological Hall algebras, in \cite{YZ1} the authors of the present paper gave a conjectural shuffle algebra description of the Yangian of any symmetric Kac-Moody Lie algebra, and constructed a surjective  map from the Yangian to the shuffle algebra. In the present paper, we combine this result and the earlier results of Enriquez \cite{E} to prove the PBW theorem  for the Yangian of any untwisted symmetric affine Lie algebra.

For $Y_\hbar(\widehat{\mathfrak{sl}}_{n})$, this theorem has been proved by N. Guay in \cite{G}. 
During the preparation of the present paper, Guay-Regelskis-Wendlandt communicated to the authors an independent proof of the PBW theorem for $Y_\hbar(\fg_{\KM})$   using the vertex operator representations of the affine Yangians \cite{GW}.

This paper is organized as follows. 
We start by recollecting relevant results from \cite{YZ2}. 
The statements of the main theorems are in \S~\ref{subsec:mainThm}.
In \S~\ref{subsec:maingoal} we recollect some results from \cite{E} and state the PBW theorem for the positive part of the affine Yangian. 
The proof of this statement is in \S~\ref{sec:positive}. Finally, in \S~\ref{sec:triang} we give a triangular decomposition of the entire affine Yangian and hence prove the PBW theorem.

\section{The affine Yangian and the shuffle algebra}

\subsection{The Yangian}
Let $A=(c_{ij})_{i, j\in I}$ be a symmetrizable generalized Cartan matrix. Thus, $c_{ii}=2$ for any $i\in I$, $c_{ij}\in \mathbb{Z}_{\leq 0}$ for any $i\neq j\in I$. Let $\fg_{\KM}$ be the Kac-Moody Lie algebra associated to $A$. Denote by $\fh$ the Cartan subalgebra of $\fg_{\KM}$, and $\{\alpha_i\}_{i\in I}$ a set of simple roots of $\fg_{\KM}$. 
The algebra $Y_\hbar(\fg_{\KM})$ is denoted by $Y_\hbar(\fg'_{\KM})$ in \cite{GNW} which is the version of the Yangian of $\fg_{\KM}$ without the degree operator. 
\begin{definition}
The Yangian $Y_\hbar(\fg_{\KM})$, is an associative algebra over $\C[\hbar]$, generated by the variables
\[
x_{k, r}^{\pm}, \xi_{k, r}, (k\in I, r\in \N),
\]
subject to relations described below. 
Take the generating series $\xi_{k}(u), x_k^{\pm}(u)\in Y_{\hbar}(\fg_Q)[\![u^{-1}]\!]$ by
\[
\xi_{k}(u)=1+\hbar \sum_{r\geq 0} \xi_{k, r} u^{-r-1}\,\ \text{and}\,\ 
x_k^{\pm}(u)=\hbar \sum_{r\geq 0} x_{k, r}^{\pm} u^{-r-1}. \]
The following is a complete set of relations defining $Y_\hbar(\fg_{\KM})$ (see, e.g.,  \cite[Proposition 2.3]{GTL16}):

\makeatletter
\let\orgdescriptionlabel\descriptionlabel
\renewcommand*{\descriptionlabel}[1]{%
  \let\orglabel\label
  \let\label\@gobble
  \phantomsection
  \edef\@currentlabel{#1}%
  \let\label\orglabel
  \orgdescriptionlabel{#1}%
}
\makeatother

\begin{description}
\item[(Y1)\label{Y1}]
For any $i, j\in I$, and $h, h'\in \fh$
\begin{align*} 
[\xi_{i}(u), \xi_{i}(v)]=0, \,\  [\xi_{i}(u), h]=0,\,\  [h, h']=0
\end{align*}
\item[(Y2)\label{Y2}] 
For any $i\in I$, and $h \in \fh$,
 \[[h, x_{i}^{\pm}(u)]=\pm \alpha_{i}(h) x_{i}^{\pm}(u)\]
 \item[(Y3) \label{Y3}]
 For any $i, j\in I$, and $a=\frac{\hbar c_{ij}}{2}$
 \[
 (u-v\mp a) \xi_i(u)x_j^{\pm}(v)
=(u-v\pm a) x_j^{\pm}(v) \xi_i(v) \mp 2a x_{j}^{\pm}(u\mp a)\xi_{i}(u)
 \]
 \item[(Y4)\label{Y4}] 
 For any $i, j\in I$, and $a=\frac{\hbar c_{ij}}{2}$
 \[
 (u-v\mp a) x_i^{\pm}(u)x_j^{\pm}(v)
=(u-v\pm a) x_j^{\pm}(v) x_i^{\pm}(u)+ \hbar
\Big( [x_{i,0}^{\pm}, x_j^{\pm}(v)]-[x_i^{\pm}(u), x_{j, 0}^{\pm}]\Big) \]
 \item[(Y5) \label{Y5}] 
 For any $i, j\in I$, 
 \[
 (u-v)[x^+_i (u),x^-_j (v)]=-\delta_{ij} \hbar(\xi_i(u)-\xi_i(v))
 \]
 \item[(Y6) \label{Y6}]
 For any $i\neq j\in I$, 
 \[
 \sum_{\sigma\in\fS_{1-c_{ij}}}[x_i^{\pm}(u_{\sigma(1)}),[x_i^{\pm}(u_{\sigma(2)}),[\cdots,[x_i^{\pm}(u_{\sigma (1-c_{ij})}),x_j^{\pm}(v)]\cdots]]]=0. \]
\end{description}
\end{definition}

Let $Y_\hbar^+(\fg_{\KM})$ (resp. $Y_\hbar^-(\fg_{\KM})$) be the algebra generated by the elements $x_{k, r}^{+}$, (resp. $x_{k, r}^{-}$) for $k\in I, r\in \N$, subject to the relations \ref{Y4} and \ref{Y6} with the $+$ sign (resp. with the $-$ sign). 
Let $Y_\hbar^{\geq 0}(\fg_{\KM})$ be the algebra generated by the elements $x_{k, r}^{+}$, $h_{k, r}$, for $k \in I, r\in \N$, subject to the relations \ref{Y1}, \ref{Y2}, \ref{Y3}, \ref{Y4}, and \ref{Y6} with the $+$ sign. 

Let $Y^0=Y^0_\hbar(\fg_{\KM})$ be the polynomial algebra generated by $\xi_{k, r}, (k\in I, r\in \N)$. 
We note that from the relation \ref{Y3} it is clear that there is a surjective map $Y_\hbar^+(\fg_{\KM})\rtimes Y^0\to Y_\hbar^{\geq 0}(\fg_{\KM})$ (see, e.g., \cite[Lemma~2.9]{GTL}).
Each of these algebras has an obvious algebra homomorphism to $Y_\hbar(\fg_{\KM})$.
 However, it is {\it a priori} not clear that $Y_\hbar^+(\fg_{\KM})$ or $ Y_\hbar^{\geq 0}(\fg_{\KM})$ are subalgebras of $Y_\hbar(\fg_{\KM})$. These facts only follow from \S~\ref{sec:triang}.

\subsection{The shuffle algebra}
The shuffle algebra associated to any formal group law is introduced in \cite{YZ1, YZ2}. It is an algebraic description of the cohomological Hall algebra (COHA) associated to a preprojective algebra. The shuffle formula in \cite{YZ3} describes the action of the COHA on the Nakajima quiver varieties algebraically. In this section, we focus on the case when the formal group law is additive, and recall the definition of the shuffle algebra introduced in \cite{YZ1}. 

\subsubsection{The algebra structure}\label{subsec:shuff}
Now we assume the Cartan matrix of $\fg_{\KM}$ is symmetric.
Let $Q=(I, H)$ be the associated Dynkin quiver of the symmetric Kac-Moody Lie algebra $\fg_{\KM}$ with vertex set $I$ and arrow set $H$. For each arrow $h\in H$, we denote by $\inc(h)$ (resp. $\out(h)$) the incoming (resp. outgoing) vertex of $h$.
Thus, the vertex set $I$ is the index set of the simple roots of $\fg_{\KM}$, and the arrow set $H$ encodes the Cartan matrix of $\fg_{\KM}$.  

Let $\SH$ be an $\bbN^I$-graded $\mathbb{C}[\hbar]$-algebra. As a $\mathbb{C}[\hbar]$-module, we have 
\[\SH=\bigoplus_{\vec{v}\in\bbN^I}\SH_{\vec{v}}, \,\ \text{where}\,\ \SH_{\vec{v}}:=\mathbb{C}[\hbar]\otimes \mathbb{C}
[ \lambda^i_s ]_{i\in I, s=1,\dots, v^i}^{\fS_{\vec{v}}},\]
here $\fS_{\vec{v}}=\prod_{i\in I} \fS_{v^i}$ is the product of symmetric groups, and $\fS_{\vec{v}}$ naturally acts on the variables $\{ \lambda^i_s \}_{i\in I, s=1,\dots, v^i}$ by permutation. 
For any $\vec{v_1}$ and $\vec{v_2}\in \bbN^I$, 
we consider $\SH_{\vec{v_1}}\otimes_{\mathbb{C}[\hbar]} \SH_{\vec{v_2}}$ as a subalgebra of $
\mathbb{C}[\hbar][\lambda^i_s]_{\{ i\in I, s=1,\dots, ({v_1^i}+{v_2^i}) \}}$ 
by sending $\lambda'^i_s  \in \SH_{\vec{v_1}} $ to $\lambda^i_s$, and 
$\lambda''^i_s \in \SH_{\vec{v_2}} $ to $\lambda^i_{s+v_1^i}$.

The factor $\fac_{\vec{v_1}, \vec{v_2}}$ is defined as follows. 
For any pair of vertices $i$ and $j$ with arrows  $h_1, \dots, h_a$ from $i$ to $j$, we associated to each arrow $h_p$ the pairs of integers  $m_{h_p}=a+2-2p$ and $m_{h_p^*}=-a+2p$. Then,
\begin{align*}
\fac_{\vec{v_1}, \vec{v_2}}&:=
\prod_{i\in I}\prod_{s=1}^{v_1^i}
\prod_{t=1}^{v_2^i}\frac{\lambda\rq{}^i_s- \lambda\rq{}\rq{}^i_t+ \hbar}
{\lambda\rq{}^i_s-\lambda\rq{}\rq{}^i_t }\cdot \\
&\cdot \prod_{h\in H}\Big(
\prod_{s=1}^{v_1^{\out(h)}}
\prod_{t=1}^{v_2^{\inc(h)}}
(\lambda_t^{'' \inc(h)}- \lambda_s^{'\out(h)}+  m_h \frac{\hbar}{2})
\prod_{s=1}^{v_1^{\inc(h)}}
\prod_{t=1}^{v_2^{\out(h)}}
(\lambda_s^{'\inc(h)}-\lambda_t^{''\out(h)}-m_{h^*} \frac{\hbar}{2})
\Big). 
\end{align*}

We define the shuffle product $\SH_{\vec{v_1}}\otimes_{\mathbb{C}[\hbar]} \SH_{\vec{v_2}}\to \SH_{\vec{v_1}+\vec{v_2}}$ by
\begin{align}
&f(\lambda_{\vec{v_1}} )\otimes g(\lambda_{\vec{v_2}})\mapsto
\sum_{\sigma\in \Sh(\vec{v_1}, \vec{v_2})}  \sigma\Big(f(\lambda'_{\vec{v_1}})\cdot g(\lambda''_{\vec{v_2}})\cdot \fac_{\vec{v_1},\vec{v_2}}\Big), \label{shuffle formula}
\end{align}
where 
$\Sh(\vec{v_1}, \vec{v_2}):=\prod_{i\in I} \Sh(v_1^i, v_2^i)$, and $\Sh(v_1^i, v_2^i)$ consists of $(v_1^i, v_2^i)$-shuffles, i.e., permutations of $\{1,\cdots ,v_1^i+v_2^i\}$ that preserve the relative order of 
$\{1,\cdots ,v_1^i\}$ and $\{v_1^i+1, \cdots, v_1^i+v_2^i\}$.

This $\fac_{\vec{v_1}, \vec{v_2}}$ is slightly different than the one in \cite{YZ1, YZ2} in that
here  the sign twist in \cite{YZ1} is absorbed into the $\fac_{\vec{v_1}, \vec{v_2}}$ used here.

A special case of the following fact is discussed in \cite[Example 3.6]{YZ3}. 
Due to technical reason we also need to consider another version of the shuffle algebra $\SH'$, whose underlying vector space is a localization of that of $\SH$, denoted by 
$\SH'=\bigoplus_{\vec{v}\in \N^I} (\SH'_{\vec{v}})_{\loc}$ to differentiate it from $\SH$. The multiplication 
$f_1(\lambda_{\vec{v_1}})\star' f_2(\lambda_{\vec{v_2}})$ is defined to be
\begin{align}
\sum_{\{\sigma\in \hbox{Sh}(v_1, v_2)\}}
\sigma\cdot \Big(f_1 \cdot f_2 \cdot \prod_{i\in I}\prod_{s=1}^{v_1^i}
\prod_{t=1}^{v_2^i}\frac{\lambda\rq{}^i_s- \lambda\rq{}\rq{}^i_t+ \hbar}
{\lambda\rq{}^i_s-\lambda\rq{}\rq{}^i_t }
\prod_{i, j\in I}
\prod_{s=1}^{v_2^{i}}
\prod_{t=1}^{v_1^{j}}
\frac{\lambda_t^{'j}-\lambda_s^{''i}-a_{ij}\frac{\hbar}{2}
}{\lambda_t^{'j}- \lambda_s^{''i}+ a_{ij}\frac{\hbar}{2}} \Big).
\label{eq:fac'}
\end{align}

\begin{prop}\label{prop:fac'}
There is an algebra homomorphism
\begin{align*}
\bigoplus_{\vec{v}\in \N^I} \SH_{\vec{v}} \to \bigoplus_{\vec{v}\in \N^I} (\SH'_{\vec{v}})_{\loc}, \,\ \text{given by} \,\ f(\lambda_{\vec{v}})\mapsto \frac{ f(\lambda_{\vec{v}})}{H_{\vec{v}}(\lambda_{\vec{v}})}, 
\end{align*}
where $
H_{\vec{v}}(\lambda_{\vec{v}}):=
\prod_{h\in H}
\prod_{s=1}^{v^{\out(h)}}
\prod_{t=1}^{v^{\inc(h)}}
(\lambda_t^{\inc(h)}- \lambda_s^{\out(h)}+  m_h \frac{\hbar}{2})$. 
In particular, when restricting on $\SH_{\vec{e_k}}$, the above homomorphism is an identity, where $\vec{e_k}$ is the dimension vector valued $1$ at vertex $k$ and zero otherwise.
\end{prop}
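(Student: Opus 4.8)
The plan is to verify directly that the assignment $\Phi\colon f(\lambda_{\vec v})\mapsto f(\lambda_{\vec v})/H_{\vec v}(\lambda_{\vec v})$ intertwines the two products, writing $\star$ for the product \eqref{shuffle formula} on $\SH$ and $\star'$ for the product \eqref{eq:fac'} on $\SH'$. First I would check that $\Phi$ is well defined: both $f$ and $H_{\vec v}$ are $\fS_{\vec v}$-invariant (the product defining $H_{\vec v}$ runs over all pairs of indices at each vertex), so the quotient is again symmetric, and since $H_{\vec v}$ is a product of the linear arrow-factors at which $\SH'$ is localized, $\Phi(f)$ lands in $(\SH'_{\vec v})_{\loc}$. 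The final assertion is then immediate: as $\fg_{\KM}$ has no loops (here $c_{kk}=2$), the product $H_{\vec{e_k}}$ is empty, so $H_{\vec{e_k}}=1$ and $\Phi$ restricts to the identity on $\SH_{\vec{e_k}}$.

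Next I would reduce the homomorphism property to a single rational identity. Fix $f_i\in\SH_{\vec{v_i}}$. Since every shuffle $\sigma$ lies in $\fS_{\vec{v_1}+\vec{v_2}}$ and $H_{\vec{v_1}+\vec{v_2}}$ is $\fS_{\vec{v_1}+\vec{v_2}}$-invariant, I may pull $1/H_{\vec{v_1}+\vec{v_2}}$ inside the shuffle sum, giving
\[
\Phi(f_1\star f_2)=\sum_{\sigma\in\Sh(\vec{v_1},\vec{v_2})}\sigma\Big(\frac{f_1(\lambda'_{\vec{v_1}})\,f_2(\lambda''_{\vec{v_2}})\,\fac_{\vec{v_1},\vec{v_2}}}{H_{\vec{v_1}+\vec{v_2}}(\lambda'\sqcup\lambda'')}\Big).
\]
Expanding $\Phi(f_1)\star'\Phi(f_2)$ from \eqref{eq:fac'} and cancelling the common $f_1(\lambda')f_2(\lambda'')$, the claim becomes the \emph{kernel} identity, to be checked before applying $\sigma$,
\[
\frac{\fac_{\vec{v_1},\vec{v_2}}}{H_{\vec{v_1}+\vec{v_2}}(\lambda'\sqcup\lambda'')}=\frac{1}{H_{\vec{v_1}}(\lambda')\,H_{\vec{v_2}}(\lambda'')}\prod_{i,j\in I}\prod_{s=1}^{v_2^i}\prod_{t=1}^{v_1^j}\frac{\lambda_t^{'j}-\lambda_s^{''i}-a_{ij}\frac{\hbar}{2}}{\lambda_t^{'j}-\lambda_s^{''i}+a_{ij}\frac{\hbar}{2}}.
\]

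Then I would evaluate $H_{\vec{v_1}+\vec{v_2}}$ at the un-shuffled configuration $\lambda'\sqcup\lambda''$. Sorting each double product over $\out(h)$- and $\inc(h)$-indices into the blocks where both, neither, or exactly one index is primed, it factors as $H_{\vec{v_1}}(\lambda')\,H_{\vec{v_2}}(\lambda'')$ times two \emph{cross} products. One of these cross products coincides term by term with the first arrow product of $\fac_{\vec{v_1},\vec{v_2}}$, so it cancels; the diagonal factor $\prod_i\prod_{s,t}(\lambda_s^{'i}-\lambda_t^{''i}+\hbar)/(\lambda_s^{'i}-\lambda_t^{''i})$ is common to $\fac_{\vec{v_1},\vec{v_2}}$ and to the right-hand side (the $i=j$ part of the second product in \eqref{eq:fac'} being trivial, as $a_{ii}=0$). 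After these cancellations the identity reduces to showing that the remaining (second) arrow product of $\fac_{\vec{v_1},\vec{v_2}}$, divided by the remaining cross product of $H_{\vec{v_1}+\vec{v_2}}$, equals the off-diagonal factor $\prod_{i\neq j}\prod_{s,t}\frac{\lambda_t^{'j}-\lambda_s^{''i}-a_{ij}\frac{\hbar}{2}}{\lambda_t^{'j}-\lambda_s^{''i}+a_{ij}\frac{\hbar}{2}}$.

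The crux — and the step I expect to be the main obstacle — is a telescoping that collapses the product over the $a$ parallel arrows from $i$ to $j$ into a single linear ratio. Fixing such a pair and setting $X=\lambda_s^{'j}-\lambda_t^{''i}$, this contribution is $\prod_{p=1}^{a}\frac{X-m_{h_p^*}\hbar/2}{X+m_{h_p}\hbar/2}$. The two key facts are that, as multisets, $\{m_{h_p}\}_{p=1}^{a}=\{m_{h_p^*}\}_{p=1}^{a}=\{a,a-2,\dots,2-a\}$, and that this set consists of values in arithmetic progression with common difference $2$; consequently all but the two extreme factors cancel between numerator and denominator, leaving exactly $\frac{X-a\hbar/2}{X+a\hbar/2}$. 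This is precisely the off-diagonal factor above with $a_{ij}$ the number $a$ of arrows from $i$ to $j$, which establishes the kernel identity and therefore the proposition.
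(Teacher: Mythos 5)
Your proposal is correct and follows essentially the same route as the paper: both reduce the homomorphism property to the single rational identity $\fac_{\vec{v_1},\vec{v_2}}/H_{\cross,\vec{v_1},\vec{v_2}}$ equals the kernel of $\star'$, where $H_{\cross,\vec{v_1},\vec{v_2}}=H_{\vec{v_1}+\vec{v_2}}/(H_{\vec{v_1}}H_{\vec{v_2}})$ is the cross part of the factored denominator, cancel one cross product against the first arrow product of $\fac_{\vec{v_1},\vec{v_2}}$, and collapse the product over the $a_{ij}$ parallel arrows by the same telescoping of the arithmetic progression $m_{h_p}=a+2-2p$, $m_{h_p^*}=-a+2p$. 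Your additional remarks on $\fS_{\vec v}$-invariance, landing in the localization, and the triviality of $H_{\vec{e_k}}$ only make explicit points the paper leaves implicit.
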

Here the localization is taken in the obvious sense. As a free polynomial ring is an integral domain, this map described above is injective. 
\begin{proof}
This map is well-defined, since the factor $H_{\vec{v}}(\lambda_{\vec{v}})$ is invariant under the action of $\hbox{Sh}(\vec{v_{1}}, \vec{v_{2}})$.

Define $
H_{\cross, \vec{v_1}, \vec{v_2}}:
=\frac{H_{\vec{v_1}+\vec{v_2}}(\lambda_{\vec{v_1}}\cup \lambda_{\vec{v_2}})}{
H_{\vec{v_1}}(\lambda_{\vec{v_1}})H_{\vec{v_2}}(\lambda_{\vec{v_2}})
}, $ we have
\begin{align*}
H_{\cross, \vec{v_1}, \vec{v_2}}
\Omit{=&\prod_{h\in H}
\frac{
\prod_{s=1}^{(v_1+v_2)^{\out(h)}}
\prod_{t=1}^{(v_1+v_2)^{\inc(h)}}
(\lambda_t^{\inc(h)}- \lambda_s^{\out(h)}+  m_h \frac{\hbar}{2})}
{
\prod_{s=1}^{v_1^{\out(h)}}
\prod_{t=1}^{v_1^{\inc(h)}}
(\lambda_t^{\inc(h)}- \lambda_s^{\out(h)}+  m_h \frac{\hbar}{2})
\prod_{s=1+v_1^{\out(h)}}^{v_1^{\out(h)}+v_2^{\out(h)}}
\prod_{t=1+v_1^{\inc(h)}}^{v_1^{\inc(h)}+v_2^{\inc(h)}}
(\lambda_t^{\inc(h)}- \lambda_s^{\out(h)}+  m_h \frac{\hbar}{2})}\\}
=&\prod_{h\in H}
\prod_{s=1}^{v_2^{\out(h)}}
\prod_{t=1}^{v_1^{\inc(h)}}
(\lambda_t^{'\inc(h)}- \lambda_s^{''\out(h)}+  m_h \frac{\hbar}{2})
\prod_{s=1}^{v_1^{\out(h)}}
\prod_{t=1}^{v_2^{\inc(h)}}
(\lambda_t^{''\inc(h)}- \lambda_s^{'\out(h)}+  m_h \frac{\hbar}{2})
\end{align*}
Dividing $\fac_{\vec{v_1}, \vec{v_2}}$ by $H_{\cross, \vec{v_1}, \vec{v_2}}$, we obtain
\begin{align*}
\frac{\fac_{\vec{v_1}, \vec{v_2}}}{H_{\cross, \vec{v_1}, \vec{v_2}} }
&=
\prod_{i\in I}\prod_{s=1}^{v_1^i}
\prod_{t=1}^{v_2^i}\frac{\lambda\rq{}^i_s- \lambda\rq{}\rq{}^i_t+ \hbar}
{\lambda\rq{}^i_s-\lambda\rq{}\rq{}^i_t }
\prod_{h\in H}
\prod_{s=1}^{v_2^{\out(h)}}
\prod_{t=1}^{v_1^{\inc(h)}}
\frac{\lambda_t^{'\inc(h)}-\lambda_s^{''\out(h)}-m_{h^*} \frac{\hbar}{2}
}{\lambda_t^{'\inc(h)}- \lambda_s^{''\out(h)}+  m_h \frac{\hbar}{2}}\\
&=
\prod_{i\in I}\prod_{s=1}^{v_1^i}
\prod_{t=1}^{v_2^i}\frac{\lambda\rq{}^i_s- \lambda\rq{}\rq{}^i_t+ \hbar}
{\lambda\rq{}^i_s-\lambda\rq{}\rq{}^i_t }
\prod_{i, j\in I}
\prod_{p=1}^{a_{ij}}
\prod_{s=1}^{v_2^{i}}
\prod_{t=1}^{v_1^{j}}
\frac{\lambda_t^{'j}-\lambda_s^{''i}+(a_{ij}-2p) \frac{\hbar}{2}
}{\lambda_t^{'j}- \lambda_s^{''i}+  (a_{ij}+2-2p) \frac{\hbar}{2}}\\
&=
\prod_{i\in I}\prod_{s=1}^{v_1^i}
\prod_{t=1}^{v_2^i}\frac{\lambda\rq{}^i_s- \lambda\rq{}\rq{}^i_t+ \hbar}
{\lambda\rq{}^i_s-\lambda\rq{}\rq{}^i_t }
\prod_{i, j\in I}
\prod_{s=1}^{v_2^{i}}
\prod_{t=1}^{v_1^{j}}
\frac{\lambda_t^{'j}-\lambda_s^{''i}-a_{ij}\frac{\hbar}{2}
}{\lambda_t^{'j}- \lambda_s^{''i}+ a_{ij}\frac{\hbar}{2}}. 
\end{align*}
It implies 
\[
\frac{f_{1}(\lambda_{\vec{v_1}}) \star f_{2}(\lambda_{\vec{v_2}})}{H_{\vec{v}}(\lambda_{\vec{v}})}
=\frac{f_{1} (\lambda_{\vec{v_1}})}{ H_{\vec{v_1}}(\lambda_{\vec{v_1}})} \star'
 \frac{f_{2}(\lambda_{\vec{v_2}})}{ H_{\vec{v_2}}(\lambda_{\vec{v_2}})}. 
\]
This completes the proof. 
\end{proof}

For each $k\in I$, recall that $\vec{e_k}$ is the dimension vector valued $1$ at vertex $k$ and zero otherwise. That is, $\vec{e_k}$ corresponds to the simple root $\alpha_k$ of $\fg_{\KM}$. 

\begin{definition}
We define the spherical subalgebra $\SH^{\sph}$ to be the subalgebra of $\SH$ generated by $\SH_{\vec{e_k}}$ as $k$ varies in $I$.\end{definition}
In \cite[\S4.2]{YZ2}, we constructed the reduced Drinfeld double $\overline{D}(\SH^{\sph,\ext})$ of $\SH$. The construction is given as follows. 
Roughly speaking, let $\SH^0:=\Sym(\fh[u])$ be the symmetric algebra of $\fh[u]$, which acts on $\SH^{\sph}$ by \cite[Lemma 1.4]{YZ2}. Let $\SH^{\ext, \sph}:=\SH^0\ltimes \SH^{\sph}$ be the extended shuffle algebra. $\SH^{\ext, \sph}$ is a bialgebra endowed with a non-degenerate bialgebra pairing, whose coproduct is given in \cite[Theorem 2.2]{YZ2} and the pairing in \cite[Theorem 3.5]{YZ2}. Let $D(\SH^{\sph,\ext})$ be the Drinfeld double of $\SH^{\sph,\ext}$, as a vector space one has 
$D(\SH^{\sph,\ext})\cong \SH^{\sph,\ext}\otimes (\SH^{\sph,\ext})^{\coop}$.
Define the reduced Drinfeld double $\overline{D}(\SH^{\sph,\ext})$ to be the quotient of $D(\SH^{\sph,\ext})$ by identifying $\SH^0$ in 
$\SH^{\sph,\ext}$ with $(\SH^0)^{\coop}$ in $(\SH^{\sph,\ext})^{\coop}$. See \cite[\S 3.2]{YZ2} for the details.  In particular, we have the triangular decomposition 
\[
\overline{D}(\SH^{\sph,\ext})\cong  \SH^{\sph}\otimes \SH^0\otimes \SH^{\sph,\coop}. 
\]

\subsubsection{The pairing $(, )_{\SH}$ on $\SH^{\sph, \ext}$}
\label{sec:pair2}
This part follows from the results in \cite{YZ2}. We briefly recall the non-degenerate pairing on the extended shuffle algebra $\SH^{\sph,\ext}=\SH^{\sph}\rtimes \SH^0$ in \cite[Section 3]{YZ2}. 

 Let $\SH_\bbA^{\sph,\ext}:=\SH^{\sph}_\bbA\rtimes \SH^0_\bbA$ be the ad\'ele version of the shuffle algebra defined in detail in \cite[\S~3.1]{YZ1}. The non-degenerate bialgebra pairing on $\SH_\bbA^{\sph,\ext}$ is defined as follows. 
\[
( \cdot , \cdot) : \SH_\bbA^{\sph,\ext} \otimes (\SH_\bbA^{\sph,\ext})^{\coop} \to \bbC
\]
\begin{itemize}
\item
For $f\in \SH_{\bbA, \vec{v}}^{\sph,\ext}$, and $g\in \SH_{\bbA, \vec{w}}^{\sph,\ext}$, we define $(f, g)=0$ if $\vec{v}\neq \vec{w}$. 
\item
For $h\in \SH^0$, and $f\in \SH_{\bbA}^{\sph}$, we define $(h, f)=0$. 
\item
For $H_k(u) \in \SH^0[\![u]\!]$, we define 
$(H_k(u), H_k(w))=\frac{\fac(u|w)}{\fac(w|u)}$, for any $k\in I$, where $H_k(u)=1+\hbar \sum_{r\geq 0} h_{k, r} u^{-r-1}$ is the generating series
of the standard basis $\{h_{k,r}=h_k\otimes \lambda^{r} \mid k\in I, r\in\bbN\}$ of $\SH^0=\Sym(\fh[\lambda])$. 
Here $\fac(u|w)$ is $\fac_{\vec{e_k},\vec{e_k}}$ with the variables $\lambda^{k'}$ and $\lambda^{k''}$ replaced by $u$ and $w$ respectively. 
\item
For any $i\in I$ and $f, g\in \SH_{\bbA, \vec{e_i}}$, we follow Drinfeld \cite{D86} and define 
\[
(f, g):=\Res_{x=\infty} (f_x\cdot g_{-x} dx). 
\]
\end{itemize}
In general, for $f, g\in \SH_{\bbA,\vec{v}}^{\sph,\ext}$, the pairing $(f, g)$ is defined to be 
\[
(f, g):= \sum_{x\in \mathbb{P}^1}\Res_{x } \Big( \frac{f(x_{\vec{v}})\cdot g(-x_{\vec{v}})}{ {\prod_{i\in I} (v^i)!} \fac(x_{\vec{v}})} dx \Big), 
\]
where
\begin{align*}
\fac(x_{\vec{v}})&:=\prod_{i\in I} \prod_{\{s, t \mid 1\leq s\neq t \leq v^i \}}
\frac{\lambda\rq{}^i_s- \lambda\rq{}\rq{}^i_t+ \hbar}{\lambda\rq{}\rq{}^i_t- \lambda\rq{}^i_s}\cdot \\
&\cdot \prod_{h\in H}\Big(
\prod_{s=1}^{v^{\out(h)}}
\prod_{t=1}^{v^{\inc(h)}}
(\lambda_t^{'' \inc(h)}- \lambda_s^{'\out(h)}+  m_h \frac{\hbar}{2})
\prod_{s=1}^{v^{\inc(h)}}
\prod_{t=1}^{v^{\out(h)}}
(\lambda_t^{''\out(h)}- \lambda_s^{'\inc(h)}+ m_{h^*} \frac{\hbar}{2})
\Big). 
\end{align*}
 
This pairing on $\SH^{\sph}_{\bbA}\otimes \SH^{\sph,\coop}_{\bbA}$ described above is a non-degenerate bialgebra pairing, which restricts to a non-degenerate one  when ${\hbar=0}$. In particular, on the set of generators, we have $\fac(x_{\vec{e_i}})=1$, and 
\begin{align*}
((\lambda^{(i)})^{r}, (\lambda^{(j)})^{s})
=&\delta_{ij} \Res_{x=\infty} (x^r \cdot (-x)^s dx)\\
 =&(-1)^{s}\delta_{ij}\delta_{r+s, -1}.
\end{align*}

\begin{theorem}\cite[Theorem B]{YZ2}
\label{thmfromYZ2}
Let $Y_\hbar(\fg_{\KM})$ be the Yangian endowed with the Drinfeld comultiplication.
Assume $Q$ has no edge-loops. Then there is a bialgebra epimorphism  
\[
\Psi: Y_\hbar(\fg_{\KM}) \surj \overline{D}(\SH^{\sph,\ext}).
\]
When $Q$ is of finite type, this map is an isomorphism. 
\end{theorem}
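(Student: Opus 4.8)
The plan is to define $\Psi$ directly on the Drinfeld generators using the triangular decomposition $\overline{D}(\SH^{\sph,\ext})\cong \SH^{\sph}\otimes\SH^0\otimes\SH^{\sph,\coop}$ recalled above, and then to verify the relations \ref{Y1}--\ref{Y6}. On the positive part I would set $x_{k,r}^+\mapsto (\lambda^k_1)^r\in\SH_{\vec{e_k}}\subset\SH^{\sph}$, i.e. the degree-$\vec{e_k}$ shuffle element given by the single variable raised to the $r$-th power; this is essentially the surjection constructed in \cite{YZ1}. Dually I would send $x_{k,r}^-$ to the corresponding element of $\SH^{\sph,\coop}$, and I would send $\xi_{k,r}\mapsto h_{k,r}\in\SH^0=\Sym(\fh[\lambda])$. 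The task is then to check that these images satisfy the defining relations, that $\Psi$ respects the Drinfeld coproduct, and that it is surjective; isomorphism in finite type is a separate dimension count.

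For well-definedness I would treat the relations in three groups. The Cartan-type relations \ref{Y1}, \ref{Y2}, \ref{Y3} reduce to the action of $\SH^0$ on $\SH^{\sph}$ from \cite[Lemma~1.4]{YZ2}, where the shift $a=\hbar c_{ij}/2$ in \ref{Y3} is read off from $\fac_{\vec{e_i},\vec{e_j}}$. The quadratic relation \ref{Y4} with the $+$ sign is a direct computation in $\SH_{2\vec{e_k}}$ and in the mixed pieces $\SH_{\vec{e_i}+\vec{e_j}}$ using the shuffle formula \eqref{shuffle formula}: the factors $m_h$ and $m_{h^*}$ in $\fac_{\vec{v_1},\vec{v_2}}$ produce exactly the $(u-v\mp a)$ numerators of \ref{Y4}. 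For the Serre relations \ref{Y6} I would pass through Proposition~\ref{prop:fac'}: since the map $\SH\to\SH'$ is an injective algebra homomorphism that is the identity on each $\SH_{\vec{e_k}}$, it suffices to check the Serre combination in the localized algebra $\SH'$, whose product \eqref{eq:fac'} is of symmetric Feigin--Odesskii type, where the alternating sum is a standard symmetrization that vanishes. The negative relations follow by the same computation in $\SH^{\sph,\coop}$.

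The cross relation \ref{Y5} is where the double structure enters: in $\overline{D}(\SH^{\sph,\ext})$ the commutator of a positive and a negative generator is governed by the bialgebra pairing $(\cdot,\cdot)_{\SH}$ of \S\ref{sec:pair2}, so checking \ref{Y5} amounts to matching that pairing with Drinfeld's pairing on $Y_\hbar(\fg_{\KM})$. The normalization is pinned down by the generator computation $((\lambda^{(i)})^r,(\lambda^{(j)})^s)=(-1)^s\delta_{ij}\delta_{r+s,-1}$ recorded above, which is precisely the value of the Yangian pairing of the root vectors $x_{i,r}^+$ and $x_{j,s}^-$. Compatibility of $\Psi$ with the coproducts follows by comparing the Drinfeld comultiplication with the shuffle coproduct of \cite[Theorem~2.2]{YZ2} on the generating series, both being group-like up to the Cartan series; non-degeneracy of the pairing on $\SH^{\sph,\ext}_{\bbA}$ then guarantees that this generator-level map extends consistently to the reduced double. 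Surjectivity is immediate: $\SH^{\sph}$ is by definition generated by the $\SH_{\vec{e_k}}$, all of which lie in the image, and likewise for $\SH^{\sph,\coop}$ and $\SH^0$.

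Finally, for the finite-type claim I would prove injectivity by a Hilbert-series comparison. By the triangular decomposition it suffices to show $\Psi^+\colon Y_\hbar^+(\fg_{\KM})\to\SH^{\sph}$ is injective, and since it is already surjective it is enough to match graded dimensions in each $\vec{v}\in\N^I$. In finite type the classical PBW theorem gives $\dim (Y_\hbar^+)_{\vec{v}}$, while the shuffle description lets one compute $\dim (\SH^{\sph})_{\vec{v}}$; equality forces $\Psi^+$ to be an isomorphism degree by degree. I expect this last step to be the main obstacle: one must control the image of the shuffle product precisely enough to see that the spherical subalgebra is no smaller than the full positive Yangian, i.e. that \ref{Y4} and \ref{Y6} are the \emph{only} relations imposed. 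This is exactly the point at which the finite-type hypothesis is genuinely used, and in the affine case it is replaced by the comparison with Enriquez's results in the later sections rather than by a naive dimension count.
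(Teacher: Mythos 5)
First, a point of comparison: the paper does not prove Theorem~\ref{thmfromYZ2} at all; it is quoted from \cite[Theorem B]{YZ2} and used as an input. So your attempt has to be measured against the argument in \cite{YZ1, YZ2} rather than against anything in this text. Your architecture --- define $\Psi$ on the Drinfeld generators, verify \ref{Y1}--\ref{Y4} and \ref{Y6} inside $\SH^{\sph,\ext}$ and its coopposite, recover \ref{Y5} from the Drinfeld-double cross relations together with the pairing of \S\ref{sec:pair2}, and get surjectivity for free from the definition of the spherical subalgebra --- is indeed the shape of that argument.

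Two steps, however, are genuinely incomplete. (i) The verification of \ref{Y4} and \ref{Y6}. For $i\neq j$ the factor $\fac_{\vec{e_i},\vec{e_j}}$ has polynomial degree $|c_{ij}|$, not degree one, so it does not ``produce exactly the $(u-v\mp a)$ numerators'' of \ref{Y4}; one must first pass through Proposition~\ref{prop:fac'} to the localized algebra $\SH'$, where the kernel collapses to the single ratio appearing in \eqref{eq:fac'}, and even there the Serre relation \ref{Y6} is not a formal ``symmetrization that vanishes'': it is a concrete rational-function identity whose verification (a specialization and degree-counting argument on the symmetrized sum) is the main technical content of the surjection constructed in \cite{YZ1}. (ii) The finite-type isomorphism. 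Your Hilbert-series comparison is circular as stated: surjectivity of $\Psi^+$ only bounds $\dim\SH^{\sph}_{\vec{v}}$ from \emph{above} by $\dim (Y^+_\hbar)_{\vec{v}}$, and the shuffle description gives no independent computation of $\dim\SH^{\sph}_{\vec{v}}$ --- knowing that dimension is equivalent to knowing all the relations, which is precisely what is to be proved. What is needed is a \emph{lower} bound on $\SH^{\sph}$, and this comes from a nondegenerate bialgebra pairing identifying the $\hbar=0$ specialization with the enveloping algebra of $\fn_{\fin}[u]$ (the same mechanism the present paper deploys in \S~\ref{subsec:maingoal}--\ref{sec:positive} for the affine case), not from a dimension count. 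You correctly flag this as the main obstacle, but flagging it is not closing it; as written, the finite-type claim is unproven.
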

Composing $\Psi$ with the natural map $Y^+_\hbar(\fg_{\KM})\to Y_\hbar(\fg_{\KM})$, we get the
 map $\Psi: Y^+_\hbar(\fg_{\KM}) \surj \SH^{\sph}$, which  sends $x_{k, r}^+$ to $(\lambda^{(k)})^{r}\in \SH_{\vec{e_k}}= \C[\hbar][\lambda^{(k)}]$, where $k\in I$. 
Finally we recall that from \cite[\S~4.2]{YZ2}
composing $\Psi$ with the natural map $Y^-_\hbar(\fg_{\KM})\to Y_\hbar(\fg_{\KM})$ we get a surjective algebra homomorphism 
\begin{equation}\label{eqn:coop}
Y^{\leq 0}_\hbar(\fg)\to (\SH^{\sph,\ext})^{\coop}
\end{equation} 
defined by sending $x^-_{i,r}$ to $(-\lambda^{(i)})^{r}$  and  $\xi_{k,s}$ to $-(-h^{(k)})^s$.

\subsection{The main theorems}
\label{subsec:mainThm}
From now on till the end of this paper, we take $\fg_{\KM}$ to be of {\bf untwisted symmetric affine type}. The simple roots $I$ are labeled by $0,1,\cdots, n$ with $0$ being the extended node in the affine Dynkin diagram. 

In the present paper, we prove the following. 
\begin{thm}\label{thm:A}
Assume the symmetric Cartan matrix $A$ is of nontwisted affine Kac-Moody type and not of type $A_{1}^{(1)}$. The morphism $\Psi: Y_\hbar(\fg_{\KM}) \to \overline{D}(\SH^{\sph,\ext})$ is an isomorphism. 
\end{thm}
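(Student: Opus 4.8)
The plan is to promote the epimorphism $\Psi$ of Theorem~\ref{thmfromYZ2} to an isomorphism by first settling the positive part and then propagating the result through a triangular decomposition. Under the decomposition $\overline{D}(\SH^{\sph,\ext})\cong\SH^{\sph}\otimes\SH^0\otimes\SH^{\sph,\coop}$, the map $\Psi$ restricts to $\Psi^+\colon Y^+_\hbar(\fg_{\KM})\surj\SH^{\sph}$, a Cartan map $\Psi^0\colon Y^0\to\SH^0$, and $\Psi^-\colon Y^-_\hbar(\fg_{\KM})\surj\SH^{\sph,\coop}$. The Cartan map $\Psi^0$ is an isomorphism of polynomial algebras, the generators $\xi_{k,s}$ matching $-(-h^{(k)})^{s}$ as in \eqref{eqn:coop}, their independence being visible from the pairing value $((\lambda^{(i)})^{r},(\lambda^{(j)})^{s})=(-1)^{s}\delta_{ij}\delta_{r+s,-1}$. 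Granting that $\Psi^+$, and hence by the evident $\pm$-symmetry $\Psi^-$, is an isomorphism, I conclude as follows. The multiplication $m\colon Y^+_\hbar\otimes Y^0\otimes Y^-_\hbar\to Y_\hbar(\fg_{\KM})$ is surjective, because \ref{Y3} and \ref{Y5} let one reorder any monomial into the form $Y^+\cdot Y^0\cdot Y^-$. Since $\Psi$ is an algebra homomorphism respecting the three factors, the composite $\Psi\circ m$ equals $\Psi^+\otimes\Psi^0\otimes\Psi^-$ under the triangular decomposition of the double, hence is an isomorphism. Therefore $m$ is injective (yielding the triangular decomposition of $Y_\hbar(\fg_{\KM})$), and $\Psi$, being surjective and injective on $\Image m=Y_\hbar(\fg_{\KM})$, is an isomorphism.

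The heart of the matter is the injectivity of $\Psi^+$, which I would obtain by degenerating to $\hbar=0$ and invoking Enriquez. Endow $Y^+_\hbar$ and $\SH^{\sph}$ with the $\N^I$-weight grading together with an auxiliary degree in which $\hbar$ and each $\lambda^{(k)}$ have degree one and $x^+_{k,r}$ has degree $r+1$; then $\Psi^+$ is bigraded, every bigraded component is a finitely generated $\C[\hbar]$-module, and $\SH^{\sph}$ is a free $\C[\hbar]$-module. As $Y^+_\hbar$ is presented over $\C[\hbar]$ by \ref{Y4} and \ref{Y6}, the reduction $\fU:=Y^+_\hbar/\hbar Y^+_\hbar$ is the algebra presented by these relations at $\hbar=0$; by the computation following \ref{Y4} these are precisely the Drinfeld-type loop relations of the current algebra $U(\fn^+_{\KM}[t])$. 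On the shuffle side, the classical limit $\SH^{\sph}/\hbar\SH^{\sph}$ carries the arrow-weighted shuffle product obtained as the $\hbar\to0$ limit of $\fac_{\vec{v_1},\vec{v_2}}$, and Proposition~\ref{prop:fac'} identifies it with the classical shuffle algebra $\SH_{\mathrm{cl}}$ of Enriquez. The key external input \cite{E} is that, for $\fg_{\KM}$ untwisted affine and not of type $A_1^{(1)}$, the classical shuffle map $\fU\to\SH_{\mathrm{cl}}$ sending $x_{k,r}^+\mapsto(\lambda^{(k)})^{r}$ is an isomorphism; this is Enriquez's PBW-and-duality theorem for the undeformed current algebra, and it in particular asserts that the above loop presentation of $U(\fn^+_{\KM}[t])$ is complete. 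Since $\Psi^+\bmod\hbar$ agrees with this map on generators and both are algebra homomorphisms, $\Psi^+\bmod\hbar$ is an isomorphism, in particular injective.

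To lift injectivity from the special fiber, set $K=\ker\Psi^+$ and consider the exact sequence $0\to K\to Y^+_\hbar\to\SH^{\sph}\to0$. As $\SH^{\sph}$ is free, hence flat, over $\C[\hbar]$, this sequence remains exact after $-\otimes_{\C[\hbar]}\C$, producing an injection $K/\hbar K\hookrightarrow Y^+_\hbar/\hbar Y^+_\hbar$; since $\Psi^+\bmod\hbar$ is injective, this forces $K/\hbar K=0$. Each bigraded piece $K_{\vec{v},d}$ is a finitely generated $\C[\hbar]$-module concentrated in nonnegative auxiliary degree, so graded Nakayama gives $K=0$, and $\Psi^+$ is an isomorphism.

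I expect the main obstacle to lie in the classical-limit identifications underpinning the middle step. One must verify that $Y^+_\hbar$ is presented cleanly enough that $\fU=Y^+_\hbar/\hbar$ is genuinely $U(\fn^+_{\KM}[t])$ rather than a proper quotient---precisely the completeness of the loop presentation that Enriquez's PBW theorem resolves---and that the normalization of Proposition~\ref{prop:fac'} carries $\SH^{\sph}/\hbar$ isomorphically onto the shuffle algebra to which \cite{E} applies, so that $\Psi^+\bmod\hbar$ may be recognized as Enriquez's shuffle map. This is also where the hypothesis $A\neq A_1^{(1)}$ is forced upon us, since that is exactly the case left uncovered by the current-algebra PBW theorem of \cite{E}.
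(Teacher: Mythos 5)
Your overall architecture --- prove the positive half is an isomorphism by specializing to $\hbar=0$ and invoking Enriquez, then propagate through the triangular decomposition --- is the same as the paper's, and your triangular-decomposition endgame and the flatness-plus-graded-Nakayama lifting are sound in outline (the paper achieves the lift differently, by comparing the loop filtration $F$ with the $\hbar$-filtration through a third, total-degree grading, because it also wants the statement about $\gr(Y^+_\hbar(\fg_{\KM}))$ in Corollary~\ref{cor1}; your Nakayama argument is a legitimate shortcut if one only wants Theorem~\ref{thm:A}).

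The genuine gap is in your identification of the special fiber and in what you attribute to \cite{E}. The algebra presented by \ref{Y4} at $\hbar=0$ together with \ref{Y6} is \emph{not} $U(\fn_{\KM}[u])$: by Proposition~\ref{prop:fromE} (Enriquez's Proposition~1.6) it is $U(\mathfrak{t}^+)$, where $\mathfrak{t}^+=\fn_{\KM}[u]\oplus\bigoplus_{k>0,\,l\in\bbN}\bbC K_{k\delta}[l]$ is a nontrivial central extension of $\fn_{\KM}[u]$; the loop presentation is ``complete'' only for this central extension, and $U(\fn_{\KM}[u])$ is a proper quotient of $Y^+_\hbar/\hbar Y^+_\hbar$. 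Correspondingly, $\SH^{\sph}|_{\hbar=0}$ is $U(\mathfrak{t}^+)$, not $U(\fn_{\KM}[u])$ --- this is precisely why the statement of Corollary~\ref{cor1} involves the universal central extension $\mathfrak{t}$. More seriously, the injectivity of $\Psi^+\bmod\hbar$ is not something you can simply quote from \cite{E}: the classical limit of the COHA shuffle algebra of \cite{YZ1} is not literally Enriquez's (Feigin--Odesskii-type) shuffle algebra, and identifying the two is exactly the content of Step~1 of the paper's proof (Proposition~\ref{prop: hbar=0}). The paper establishes injectivity by equipping $U(\mathfrak{t}^+)$ with the non-degenerate bialgebra pairing coming from its embedding into the enlarged toroidal algebra (Proposition~\ref{prop:pairing}), equipping $\SH^{\sph}|_{\hbar=0}$ with the residue pairing of \S~\ref{sec:pair2}, checking on generators that $\Psi|_{\hbar=0}$ intertwines the two bialgebra pairings, and then invoking non-degeneracy. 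Without this duality step (or an equivalent identification of $\SH^{\sph}|_{\hbar=0}$ with a shuffle algebra to which the PBW theorem of \cite{E} literally applies), your ``key external input'' is an assertion rather than a citation, and it is the crux of the whole theorem.
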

Theorem \ref{thm:A} is proved in a  similar  fashion to \cite{E} and \cite{Neg}. In \cite{E}, the PBW theorem for the quantum Kac-Moody algebras is proved using an isomorphism with the Feigin-Odesskii shuffle algebras. In \cite{Neg}, an isomorphism of the quantum toroidal algebra of $\mathfrak{gl}_n$ and the double shuffle algebra of Feigin-Odesskii is established. 

As a consequence of Theorem \ref{thm:A}, we have the following PBW theorem of the affine Yangian $Y_\hbar(\fg_{\KM})$. 

Define a filtration on $Y_\hbar(\fg_{\KM})$ by $\deg x^{\pm}_{i,r}\leq r$ and $\deg h_{i,r}\leq r$ for all $i\in I$ and $r\in\bbN$.
Let $\gr(Y_\hbar(\fg_{\KM}))$ be the associated graded algebra. Let $\mathfrak{t}$ be the universal central extension of $\fg_{\KM}[u]$. 
\begin{cor}\label{cor1}
We have the natural isomorphism $\gr(Y_\hbar(\fg_{\KM})) \cong  U(\mathfrak{t})[\hbar]$. 
\end{cor}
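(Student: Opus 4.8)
The plan is to obtain $U(\mathfrak{t})[\hbar]$ as $\gr(Y_\hbar(\fg_{\KM}))$ in two moves: construct a surjection by degenerating the defining relations to their top-order symbols, and then prove injectivity by a bigraded dimension count furnished by Theorem \ref{thm:A}.

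\emph{Constructing the surjection.} First I would pass each relation \ref{Y1}--\ref{Y6} to its leading symbol for the filtration $\deg x^\pm_{i,r},\deg h_{i,r}\le r$, with $\hbar$ placed in degree $0$. Comparing coefficients of $u^{-r-1}v^{-s-1}$ in \ref{Y4} yields
\[
[x^\pm_{i,r+1},x^\pm_{j,s}]-[x^\pm_{i,r},x^\pm_{j,s+1}]
=\pm\tfrac{\hbar c_{ij}}{2}\{x^\pm_{i,r},x^\pm_{j,s}\}+(\text{terms supported at }r=0\text{ or }s=0),
\]
whose right-hand side has strictly smaller filtration degree than the left; hence in $\gr$ the symbols $\bar x^\pm_{i,r}$ satisfy $[\bar x^\pm_{i,r+1},\bar x^\pm_{j,s}]=[\bar x^\pm_{i,r},\bar x^\pm_{j,s+1}]$, so the bracket depends only on $r+s$, exactly as in $\fg_{\KM}[u]$. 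Using the expansion $\tfrac{u^{-r-1}-v^{-r-1}}{u-v}=-\sum_{a+b=r}u^{-a-1}v^{-b-1}$, relation \ref{Y5} degenerates to $[\bar x^+_{i,a},\bar x^-_{j,b}]=\delta_{ij}\bar\xi_{i,a+b}$, while \ref{Y1}--\ref{Y3} give the Cartan and weight relations and \ref{Y6} the Serre relations of $\fg_{\KM}[u]$. Thus the Lie algebra spanned by the leading symbols is a central extension of $\fg_{\KM}[u]$; by the universal property of $\mathfrak{t}$ this produces an algebra surjection
\[
\rho\colon U(\mathfrak{t})[\hbar]\twoheadrightarrow \gr(Y_\hbar(\fg_{\KM})).
\]

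\emph{Proving injectivity.} As $\rho$ is a surjection of $\bbN^I\times\bbN$-bigraded $\C[\hbar]$-modules (graded by root and by filtration degree), it is enough to check that $\gr Y_\hbar$ is at least as large as $U(\mathfrak{t})[\hbar]$ in each bidegree. Here I would invoke Theorem \ref{thm:A}: the isomorphism $\Psi$ together with the triangular decomposition $\overline{D}(\SH^{\sph,\ext})\cong\SH^{\sph}\otimes\SH^0\otimes\SH^{\sph,\coop}$ transports to a filtered isomorphism $Y_\hbar\cong Y^-_\hbar\otimes Y^0\otimes Y^+_\hbar$, whence $\gr Y_\hbar\cong\gr Y^-_\hbar\otimes\gr Y^0\otimes\gr Y^+_\hbar$. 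The middle factor $Y^0$ is the polynomial algebra on the $\xi_{k,r}$ and matches $\Sym(\mathfrak{t}^0)$; the positive factor $Y^+_\hbar\cong\SH^{\sph}$ has its bigraded dimension read off from the shuffle presentation in the manner of Enriquez \cite{E}, and this dimension equals that of $U(\mathfrak{n}^+[u])[\hbar]$, the strictly positive part of $U(\mathfrak{t})[\hbar]$ (the central elements sit in weight $0$ and contribute nothing here). The negative factor is treated symmetrically through the map \eqref{eqn:coop}. Matching these three characters against $U(\mathfrak{t})\cong U(\mathfrak{t}^-)\otimes U(\mathfrak{t}^0)\otimes U(\mathfrak{t}^+)$ forces $\rho$ to be bijective in every bidegree, hence an isomorphism.

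The main obstacle is the degree-$0$ part, namely matching the polynomial algebra $Y^0$ with $\Sym(\mathfrak{t}^0)$ for the \emph{universal} central extension: one must verify that exactly the right central elements appear, with no spurious relations and none missing. This is precisely where the computation of the pairing on $\SH^0$ and its nondegeneracy from \cite{YZ2} pin down the central part, whereas the nonzero-weight parts follow cleanly from the positive-part PBW and the $Y^+_\hbar\leftrightarrow Y^-_\hbar$ symmetry.
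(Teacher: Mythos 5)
Your overall architecture --- a surjection $U(\mathfrak{t})[\hbar]\twoheadrightarrow\gr(Y_\hbar(\fg_{\KM}))$ obtained by degenerating the relations, followed by a size comparison against the triangular decomposition and the positive-part result --- is the right shape, and the degenerations of \ref{Y4} and \ref{Y5} you compute are correct. The gap is in the injectivity step, and it is fatal as written: you assert that the central elements of $\mathfrak{t}$ ``sit in weight $0$ and contribute nothing'' to the positive part, so that the positive factor should have the character of $U(\fn_{\KM}[u])[\hbar]$. For $\fg_{\KM}$ of affine type this is false. The universal central extension of $\fg_{\KM}[u]$ has an infinite-dimensional center containing the elements $K_{k\delta}[l]$, which live in the \emph{imaginary weights} $k\delta\neq 0$; accordingly $\mathfrak{t}^+=\fn_{\KM}[u]\oplus\bigoplus_{k>0,\,l\in\bbN}\bbC K_{k\delta}[l]$ is strictly larger than $\fn_{\KM}[u]$ in every weight $k\delta$ (Proposition~\ref{prop:fromE}, quoted from Enriquez; this is exactly what distinguishes the affine case from the finite one). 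Taken literally, your character match would force $\gr(Y^+_\hbar(\fg_{\KM}))\cong U(\fn_{\KM}[u])[\hbar]$, i.e.\ it would show that the classes of the $K_{k\delta}[l]$ lie in the kernel of your map $\rho$ --- the opposite of what you want --- and it contradicts Proposition~\ref{prop:fromE}(1), which identifies $Y^+_{\hbar=0}(\fg_{\KM})$ with $U(\mathfrak{t}^+)$, not with $U(\fn_{\KM}[u])$.

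Relatedly, you have located the difficulty in the wrong place. The weight-zero part is the easy part: $Y^0$ is by definition the polynomial algebra on the $\xi_{k,r}$ and is identified with $\SH^0=\Sym(\fh[u])\cong U(\mathfrak{t}^0)$ outright; there are no extra central elements of $\mathfrak{t}$ hiding there. The genuine content of the corollary is that the imaginary-weight central elements $K_{k\delta}[l]$, $k>0$, survive in $\gr_F(\SH^{\sph})$, i.e.\ that the spherical shuffle algebra is as large as $U(\mathfrak{t}^+)[\hbar]$ in the imaginary root spaces. This cannot be ``read off from the shuffle presentation'': it is precisely Theorem~\ref{thm:main}, whose proof occupies \S~\ref{sec:positive} (the nondegenerate pairing on $\mathfrak{t}^+$ coming from the toroidal algebra matched against the residue pairing on $\SH^{\sph}|_{\hbar=0}$, followed by the regrading isomorphisms $\gr_{\hbar}(\SH)\cong\SH|_{\hbar=0}[\hbar]$ and $\gr_F(\SH^{\sph})\cong\gr_{\hbar}(\SH^{\sph})$). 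The argument closes once you replace your character comparison by: $U(\mathfrak{t})\cong U(\mathfrak{t}^+)\otimes U(\mathfrak{t}^0)\otimes U(\mathfrak{t}^-)$ with $\mathfrak{t}^\pm$ as above, Theorem~\ref{thm:main} for the positive factor, its mirror via \eqref{eqn:coop} and \eqref{eqn:t+} for the negative factor, and the triangular decomposition of \S~\ref{sec:triang} for $Y_\hbar(\fg_{\KM})$ itself --- which is how the paper deduces the corollary.
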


In \S~\ref{sec:triang}, we also prove a triangular decomposition of $Y_\hbar(\fg_{\KM})$.

\section{The positive part}
\label{subsec:maingoal}
In this section, we focus on the positive part  $Y^+_\hbar(\fg_{\KM})$ of the Yangian, and study the map $\Psi: Y^+_{\hbar}(\fg_{\KM})\surj \SH^{\sph}$. 

\subsection{Preliminaries on the current algebras} 

We collect some facts from \cite[\S1.3]{E}. Let $\fg_{\fin}$ be the finite dimensional Lie algebra of $\fg_{\KM}$. 
Let $\mathfrak{g}_{\fin}=\fn_{\fin}^-\oplus \fh_{\fin}\oplus \fn_{\fin}$ be the triangular decomposition of $\mathfrak{g}_{\fin}$. Then, $\fn_{\KM}=(\lambda \bbC[\lambda]\otimes (\fn_{\fin}^-+\fh_{\fin})+\bbC[\lambda]\otimes  \fn_{\fin})$ \cite[\S 7.6]{K}.

\subsubsection{A presentation}
The set up in \cite[\S1.3]{E} is for the loop algebra $\fn_{\KM}[u, u^{-1}]$. For our purposes, we state the results for the current algebra $\fn_{\KM}[u]$. 

Define $\mathfrak{t}^+$ to be the direct sum $\fn_{\KM}[u]\oplus\bigoplus_{k>0, l \in \mathbb{N}} \bbC K_{k\delta}[l]$, and endow it with the bracket such that the elements $K_{k\delta}[l]$ are central, and 
\[
[(x\otimes u^l, 0), (y\otimes u^{m}, 0)]
=([x\otimes u^l, y\otimes u^{m}], \langle \bar{x}, \bar{y} \rangle(lk''-mk'')K_{(k'+k'')\delta}[l+m]), 
\]
where $x\mapsto \bar{x}\otimes \lambda^{k'}, y\mapsto \lambda^{k''}$ by the inclusion $\fn_{\KM}\subset \mathfrak{g}_{\fin}[\lambda, \lambda^{-1}]$, and 
$\langle\bar{x}, \bar{y}\rangle$ is an invariant bilinear form on $\mathfrak{g}_{\fin}$. 

\begin{definition}\label{def:F+}
Let $\tilde{F}^{+}$ be the Lie algebra with generators $x_{i, k}^+$, $i\in I, k\in\bbN$, and relations given by \ref{Y4} with $\hbar=0$, and \ref{Y6}. In other words, $U(\tilde{F}^{+})=Y^+_{\hbar=0}(\fg_{\KM})$.  
\end{definition}
It is straightforward to see that there is a Lie algebra morphism 
\[
j^+: \tilde{F}^{+}\to \fn_{\KM}[u], \,\ x_{i, k}^+\mapsto x_{i}\otimes u^k.
\] 

 \begin{prop}\cite[Proposition~1.6]{E} \label{prop:fromE}
 \begin{enumerate}
 \item When $A$ is of affine Kac-Moody type, the kernel $j^+$ is equal to the center of $\tilde{F}^{+}$, so that $\tilde{F}^{+}$ is a central extension of $\fn_{\KM}[u]$. 
 \item We have a unique Lie algebra map $j': \mathfrak{t}^{+} \to \tilde{F}^{+}$ such that $j'(e_i\otimes t^k)=x_{i, k}^+$, where $i=0, 1, \cdots, n$ This map is an isomorphism iff $A$ is not of type $A_{1}^{(1)}$. If $A$ is of type $A_{1}^{(1)}$, $j'$ is surjective, and its kernel is $\oplus_{k\in \bbN}\bbC K_{\delta}[k]$. 
 \end{enumerate}
  \end{prop}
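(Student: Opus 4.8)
The plan is to follow Enriquez \cite[\S1.3]{E}, adapting the argument from the loop algebra $\fn_{\KM}[u,u^{-1}]$ to the current algebra $\fn_{\KM}[u]$; the only change is that all powers of $u$ are now restricted to $\bbN$, which does not affect the structure of the relevant root spaces. The whole argument is organized around the factorization
\[
\mathfrak{t}^{+}\xrightarrow{\,j'\,}\tilde{F}^{+}\xrightarrow{\,j^+\,}\fn_{\KM}[u].
\]
First I would construct $j'$: one checks that the assignment $e_i\otimes t^k\mapsto x_{i,k}^+$ is compatible with the defining bracket of $\mathfrak{t}^{+}$. Since $\tilde{F}^{+}$ is presented (Definition \ref{def:F+}) by \ref{Y4} at $\hbar=0$ together with \ref{Y6}, this reduces to two checks: that the loop Serre relations \ref{Y6} hold in $\fn_{\KM}[u]$, and that the bracket imposed by \ref{Y4} at $\hbar=0$ agrees with the centrally extended bracket of $\mathfrak{t}^{+}$ displayed above, i.e. reproduces the residue cocycle valued in the central elements $K_{k\delta}[l]$. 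Uniqueness of $j'$ follows once one verifies the $e_i\otimes t^k$ generate $\mathfrak{t}^{+}$. By construction $j^+\circ j'$ is the projection $p\colon\mathfrak{t}^{+}\to\fn_{\KM}[u]$ killing all $K_{k\delta}[l]$, and both $j'$ and $j^+$ are surjective, the $x_{i,k}^+$ being generators of $\tilde{F}^{+}$.

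All three algebras are graded by $\big(\bigoplus_i\bbN\alpha_i\big)\times\bbN$, with $x_{i,k}^+$ and $e_i\otimes t^k$ in degree $(\alpha_i,k)$ and $K_{k\delta}[l]$ in degree $(k\delta,l)$, and all maps are homogeneous. Comparing dimensions degree by degree, the surjections give $\dim\fn_{\KM}[u]_\beta\le\dim\tilde{F}^+_\beta\le\dim\mathfrak{t}^+_\beta$. For a real root the outer two dimensions both equal $1$, so $j'$ and $j^+$ are isomorphisms in every real degree; hence $\ker p=\bigoplus_{k>0,l}\bbC K_{k\delta}[l]$ and $\ker j'\subseteq\ker p$ are supported only in imaginary degrees $(k\delta,l)$. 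Since $\ker j^+=j'(\ker p)$ is the image under the surjection $j'$ of the center $\ker p$ of $\mathfrak{t}^+$, it is central, so $\ker j^+\subseteq Z(\tilde{F}^+)$. Conversely, for $z\in Z(\tilde{F}^+)$ one has $j^+(z)\in Z(\fn_{\KM}[u])=Z(\fn_{\KM})\otimes\bbC[u]=0$, because the affine $\fn_{\KM}$ is positively graded with no maximal height and is therefore centerless; thus $z\in\ker j^+$. This proves part (1).

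It remains to determine $\ker j'$, i.e. for each imaginary degree to decide whether $\dim\tilde{F}^+_{(k\delta,l)}$ equals $n+1$ (so $K_{k\delta}[l]\notin\ker j'$) or $n=\dim\fn_{\KM}[u]_{(k\delta,l)}$ (so $K_{k\delta}[l]\in\ker j'$). This is the crux. Equivalently one must decide whether the component of the residue cocycle in degree $(k\delta,l)$ is cohomologically nontrivial, i.e. whether $\tilde{F}^+$ genuinely is the universal central extension in that degree. I would establish the lower bound $\dim\tilde{F}^+_{(k\delta,l)}\ge n+1$ by Enriquez's nondegenerate pairing (or a suitable module) detecting a degree-$(k\delta,l)$ bracket monomial that is independent modulo $\fn_{\KM}[u]$; the nonabelianness of $\fn_{\fin}$ makes the residue pairing nondegenerate, so the central element survives and $j'$ is an isomorphism whenever $A\ne A_{1}^{(1)}$. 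For $A=A_{1}^{(1)}$ one has $n=1$ and $c_{01}=-2$, so \ref{Y6} is the cubic Serre relation; a direct computation in the lowest imaginary degrees shows this relation forces $j'(K_\delta[l])=0$ for every $l$, while the elements $K_{k\delta}[l]$ with $k\ge2$ survive, giving $\ker j'=\bigoplus_{l\in\bbN}\bbC K_\delta[l]$. This yields part (2).

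The main obstacle is the dimension count of the last step: proving from the presentation that the residue cocycle is cohomologically nontrivial in each imaginary degree. The nonvanishing (lower bound) direction is the delicate one, requiring a genuine detecting device — Enriquez's Hopf pairing or an explicit representation — rather than the relations alone; and isolating the $A_{1}^{(1)}$ degeneration, which stems from $\mathfrak{sl}_2$ having an abelian nilpotent subalgebra so that the extension splits at level $\delta$, is the other point demanding care.
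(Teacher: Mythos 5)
The paper does not prove this proposition at all: it is imported verbatim from Enriquez \cite[Proposition~1.6]{E} (transposed from the loop algebra $\fn_{\KM}[u,u^{-1}]$ to the current algebra $\fn_{\KM}[u]$), so there is no in-paper proof to match your sketch against. Judged on its own, your sketch has the right skeleton --- the grading by $\big(\bigoplus_i\bbN\alpha_i\big)\times\bbN$, the one-dimensionality of real root spaces forcing everything interesting into the imaginary degrees $(k\delta,l)$, the dimension sandwich $n\le\dim\tilde{F}^{+}_{(k\delta,l)}\le n+1$, and the identification of the two genuinely hard points (the lower bound via a nondegenerate pairing, and the collapse at level $\delta$ for $A_1^{(1)}$). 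But those two points are only named, not carried out, and there is a concrete logical gap earlier that would need repair before they are even reached.

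The gap is in the construction of $j'$. You say that defining $j'\colon\mathfrak{t}^{+}\to\tilde{F}^{+}$ ``reduces to'' checking that \ref{Y6} and \ref{Y4} at $\hbar=0$ are compatible with the bracket of $\mathfrak{t}^{+}$. Checking that the defining relations of $\tilde{F}^{+}$ hold in a target algebra produces a map \emph{out of} $\tilde{F}^{+}$ (e.g.\ a map $\tilde{F}^{+}\to\mathfrak{t}^{+}$, or $j^{+}$ itself), not a map out of $\mathfrak{t}^{+}$, which is defined as an explicit central extension and not by generators and relations. To get $j'$ one must either (i) first establish part (1) independently --- that $\tilde{F}^{+}$ is a graded central extension of $\fn_{\KM}[u]$ generated in the simple-root degrees --- and then compare its $2$-cocycle with the residue cocycle defining $\mathfrak{t}^{+}$ (this is essentially Enriquez's route, resting on his earlier centrality result for the kernel), or (ii) prove a presentation of $\mathfrak{t}^{+}$, which is close to the content of the proposition itself. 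As written, your proof of part (1) (``$\ker j^{+}=j'(\ker p)$ is central because $j'$ is surjective'') presupposes $j'$, while your construction of $j'$ implicitly needs the centrality statement of part (1); this is circular. Breaking the circle --- proving directly from \ref{Y4}$|_{\hbar=0}$ and \ref{Y6} that $\ker j^{+}$ is central --- is the first real theorem here, and together with the two ``crux'' computations you deferred, it is the bulk of \cite[\S1.3]{E}.
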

As a consequence, when $A$ is of affine Kac-Moody type and not of type $A_{1}^{(1)}$, there is an algebra homomorphism 
$U(\mathfrak{t}^+) \to U( \fn_{\KM}[u])$ extending the identity map on $\fn_{\KM}[u]$. The kernel of this map  is generated by $\bigoplus_{k>0, l \in \mathbb{N}} K_{k\delta}[l]$.

\subsubsection{The pairing $(, )_{\mathfrak{t}}$ on $U(\mathfrak{t}^+)$}
\label{subsec:3.1}
This part follows from the results in \cite{E}. 

We first recall the pairing defined on a toroidal algebra in \cite[Remark~4.3]{E}. Let $\mathfrak{tor}$ be the universal central extension of $\fg_{\fin}[\lambda^{\pm}, u^{\pm}]$. Then,  
\[
\mathfrak{tor}=\fg_{\fin}[\lambda^{\pm}, u^{\pm}]\oplus Z, 
\] where $
Z=\Omega^1_{ \bbC[\lambda^{\pm}, u^{\pm}]}/d( \bbC[\lambda^{\pm}, u^{\pm}])
=\bigoplus_{k, l \in \mathbb{Z}} K_{k\delta}[l] \oplus\bbC c$, and $K_{k\delta}[l]$ is the class
of $\frac{1}{k} \lambda^k  u^{l-1} du$ if $k\neq 0$, $K_{0}[l]$ is the class $u^{l} \frac{d\lambda}{\lambda}$, 
$c$ is the class $\frac{du}{u}$. We decompose $Z$ as $Z=Z_{>}\oplus Z_{0}\oplus Z_{<}$, where
 \[
 \text{$Z_{>}:=\bigoplus_{k>0, l \in \mathbb{Z}} K_{k\delta}[l]$, \,\
 $Z_{0}:=\bigoplus_{l \in \mathbb{Z}} K_{0}[l]\oplus \bbC c$ \,\ and \,\ 
$Z_{<}:=\bigoplus_{k<0, l \in \mathbb{Z}} K_{k\delta}[l]$.}
\] 

Enlarge $\mathfrak{tor}$ by $\tilde{\mathfrak{tor}}:=
\fg_{\fin}[u^{\pm}, \lambda^{\pm}]\oplus Z \oplus D$, 
where 
\[D=D_{>}\oplus D_{0}\oplus D_{<}= 
(\bigoplus_{k>0, l \in \mathbb{Z}} D_{k\delta}[l])  \oplus (\bigoplus_{l \in \mathbb{Z}} D_{0}[l]\oplus \bbC d)\oplus (\bigoplus_{k<0, l \in \mathbb{Z}} D_{k\delta}[l]).
\]
The following pairing on $\tilde{\mathfrak{tor}}$ can be found in \cite[Remark~4.3]{E}. 
\begin{prop} 
\label{prop:pairing}
There is a non-degenerate bialgebra pairing $(, )_{\tilde{\mathfrak{tor}}}$ on $\tilde{\mathfrak{tor}}=\fg_{\fin}[\lambda^{\pm}, u^{\pm}]\oplus Z \oplus D$. 
Let $x, x'\in \fg_{\fin}$, the pairing $(, )_{\tilde{\mathfrak{tor}}}$ is the following. 
\begin{align*}
&(x\otimes \lambda^au^b, x'\otimes \lambda^{a'} u^{b'})_{\tilde{\mathfrak{tor}}}
=\langle x, x'\rangle_{\fg_{\fin}} \delta_{a+a', 0} \delta_{b-b', 0}, \\
& (K_{k\delta}[l], D_{k'\delta}[l'])_{\tilde{\mathfrak{tor}}}=\delta_{k+k', 0} \delta_{l-l', 0}, \,\, (d, c)_{\tilde{\mathfrak{tor}}}=1. 
\end{align*}
and all other pairings of elements 
$x\otimes \lambda^a u^b, K_{k\delta}[l], D_{k\delta}[l], c, d$ are zero.
\end{prop}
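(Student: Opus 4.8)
The plan is to regard Proposition~\ref{prop:pairing} as a recollection of the pairing of Enriquez \cite[Remark~4.3]{E}, so that what remains is to verify three things: that $(,)_{\tilde{\mathfrak{tor}}}$ is (i) well defined, (ii) non-degenerate, and (iii) a bialgebra pairing, i.e.\ compatible with the bracket and cobracket of $\tilde{\mathfrak{tor}}$ in the same sense as the pairing on $\SH^{\sph,\ext}$ recalled in \S\ref{sec:pair2}. First I would fix the homogeneous basis of $\tilde{\mathfrak{tor}}=\fg_{\fin}[\lambda^{\pm},u^{\pm}]\oplus Z\oplus D$ consisting of the $x\otimes\lambda^{a}u^{b}$ (with $x$ running over a basis of $\fg_{\fin}$ and $(a,b)\in\bbZ^{2}$), together with the $K_{k\delta}[l]$, $D_{k\delta}[l]$, $c$ and $d$. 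Since the formula prescribes $(,)_{\tilde{\mathfrak{tor}}}$ on each ordered pair of these vectors, bilinear extension produces a well-defined (and, reading the list up to symmetry, symmetric) form, with no consistency condition to check at this stage.

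For non-degeneracy I would argue block by block, the matter block $\fg_{\fin}[\lambda^{\pm},u^{\pm}]$ being orthogonal to $Z\oplus D$, while $Z$ pairs only with $D$. On the matter block the form couples the degree-$(a,b)$ subspace $\fg_{\fin}\otimes\lambda^{a}u^{b}$ with $\fg_{\fin}\otimes\lambda^{-a}u^{b}$ through $\langle,\rangle_{\fg_{\fin}}$; as the latter is a non-degenerate invariant form on $\fg_{\fin}$ and each graded piece is matched perfectly with a single mirror piece, this block is non-degenerate. On $Z\oplus D$ the form couples $K_{k\delta}[l]$ with $D_{-k\delta}[l]$ and $c$ with $d$, which is a perfect pairing by inspection, putting $Z$ and $D$ in perfect duality. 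Combining the blocks, $(,)_{\tilde{\mathfrak{tor}}}$ is non-degenerate.

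The essential point is the compatibility (iii). Reducing to generators, the bracket of $\tilde{\mathfrak{tor}}$ contributes in three ways: the bracket of $\fg_{\fin}[\lambda^{\pm},u^{\pm}]$, the central $2$-cocycle valued in $Z$, and the action of the derivations $D$ on $\fg_{\fin}[\lambda^{\pm},u^{\pm}]$ and on $Z$. The triples with all three arguments in the matter block reduce to the invariance of $\langle,\rangle_{\fg_{\fin}}$ together with the additivity of the $(\lambda,u)$-bidegree, once the co-opposite twist (the sign in the second variable, cf.\ the $g(-x)$ in \S\ref{sec:pair2}) is accounted for. The crux is the mixed triple in which $[\xi,\eta]$ produces a central element $K_{(a+a')\delta}[b+b']$ paired against $\zeta\in D$: this must be matched against the term in which $\zeta$ acts as a derivation on $\eta$, returning an element of the matter block that then pairs with $\xi$. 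Confirming this equality is precisely the statement that, under the normalizations $K_{k\delta}[l]=[\tfrac{1}{k}\lambda^{k}u^{l-1}\,du]$ and the dual derivations $D_{k\delta}[l]$, the degree-dependent coefficient in the cocycle (the $u^{\pm}$-analogue of the coefficient $(lk''-mk'')$ in the bracket of $\mathfrak{t}^{+}$) is reproduced exactly by the derivation action. I expect this residue/bidegree identity to be the main obstacle; the remaining triples either vanish on both sides by the grading, or collapse to the $c$--$d$ pairing against $d$, which is routine. Granting (iii), the polarization attached to $Z=Z_{>}\oplus Z_{0}\oplus Z_{<}$ and $D=D_{>}\oplus D_{0}\oplus D_{<}$ exhibits $(,)_{\tilde{\mathfrak{tor}}}$ as the asserted non-degenerate bialgebra pairing.
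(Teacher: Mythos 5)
The paper does not actually prove this proposition: it is recalled from Enriquez \cite[Remark~4.3]{E} (with the convention on the second variable adjusted, as the remark after the statement explains), so your decision to verify the pairing directly is a legitimately different route in principle. Your treatment of well-definedness and non-degeneracy is correct, though both are essentially immediate from the block structure: the matter block $\fg_{\fin}[\lambda^{\pm},u^{\pm}]$ pairs each graded piece $\fg_{\fin}\otimes\lambda^{a}u^{b}$ perfectly with its mirror $\fg_{\fin}\otimes\lambda^{-a}u^{b}$ through the non-degenerate form $\langle\,,\rangle_{\fg_{\fin}}$, and $Z$ and $D$ are in perfect duality by construction.

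The gap is that the only non-trivial content of the proposition --- the compatibility of the pairing with the Lie bialgebra structure of $\tilde{\mathfrak{tor}}$ --- is exactly the step you do not carry out. You correctly isolate the crux, namely the identity matching the $Z$-valued cocycle term of $[\xi,\eta]$ paired against $\zeta\in D$ with the term in which $\zeta$ acts as a derivation on $\eta$ and the result is paired with $\xi$; but you then write that you ``expect'' this identity to hold and proceed by ``Granting (iii)''. That is a statement of what remains to be proved, not a proof. To close it you would need to write the cocycle of the universal central extension explicitly in the basis $K_{k\delta}[l]=[\tfrac{1}{k}\lambda^{k}u^{l-1}du]$, $K_{0}[l]=[u^{l}\tfrac{d\lambda}{\lambda}]$, $c=[\tfrac{du}{u}]$, specify the derivation action of $D_{k\delta}[l]$ and $d$ on $\fg_{\fin}[\lambda^{\pm},u^{\pm}]$ and on $Z$, and check invariance degree by degree, keeping track of the sign twist on the second variable that distinguishes the convention here from that of \cite{E} (a point you mention but never pin down, and which is exactly where sign errors would hide). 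You should also note that, read literally, the displayed formulas only give $(K_{k\delta}[l],D_{k'\delta}[l'])$ and $(d,c)$, so the symmetry you invoke is an interpretive choice that needs to be stated, not assumed. As written, the proposal establishes the routine parts of the statement while the substantive part still rests on the citation to \cite[Remark~4.3]{E}.
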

Note that we follow the same convention as in \cite[\S~3.1]{YZ2} on parings on a Manin triple, which differs from the convention in \cite{E} by taking the group inverse on the second variable. 
The Lie algebra $\mathfrak{t}^+$ is isomorphic to the Lie subalgebra of $ \tilde{\mathfrak{tor}}$ \cite[pg. 29]{E}
\[
\fn_{\KM}[u]\oplus\bigoplus_{k>0, l \in \mathbb{N}} \bbC K_{k\delta}[l]=
(\lambda \bbC[\lambda]\otimes (\fn_{\fin}^-+\fh_{\fin})+\bbC[\lambda]\otimes  \fn_{\fin})[u]\oplus\bigoplus_{k>0, l \in \mathbb{N}} \bbC K_{k\delta}[l].
\] 
Let $\mathfrak{t}^-\subset \tilde{\mathfrak{tor}}$ be the subalgebra
$\fn_{\KM}^{-}[u]\oplus\bigoplus_{k<0, l\in \bbN} \bbC D_{k\delta}[l]=
(\lambda^{-1} \bbC[\lambda^{-1}]\otimes (\fn_{\fin}^++\fh_{\fin})+\bbC[\lambda^{-1}]\otimes  \fn_{\fin})[u]\oplus\bigoplus_{k<0, l\in \bbN} \bbC D_{k\delta}[l].$

Restricting the pairing in Proposition~\ref{prop:pairing},  we get a non-degenerate pairing 
$\mathfrak{t}^+\times \mathfrak{t}^-\to \mathbb{C}$. We now describe this paring on the 
 set of generators. We have, for $i, j=0, 1, \cdots, n$, 
\begin{equation}\label{eqn:toroidal}
(x_{i, r}^+, x_{j, s}^-)_{\mathfrak{t}}=\delta_{ij}\delta_{r-s, 0}, \,\ (K_{k\delta}[l], D_{k'\delta}[l'])=\delta_{k+k', 0} \delta_{l-l', 0}. 
\end{equation}
Indeed, 
when $i=j=0$, $x_{0, r}^+$ corresponds to $E_{-\theta}\otimes \lambda u^r$, and 
$x_{0, s}^-$ corresponds to $E_{\theta}\otimes \lambda^{-1} u^s$, where $\theta$ is the highest root of $\mathfrak{g}_{\fin}$, and $E_{\theta}\in \mathfrak{g}_{\fin,\theta}$
\cite[\S 7.4]{K}. Therefore,  $(x_{0, r}^+, x_{0, s}^-)_{\mathfrak{t}}=\delta_{ij}\delta_{r-s, 0}$.

This pairing identifies $\mathfrak{t}^+$ with the dual of $\mathfrak{t}^-$ endowed with the opposite coproduct.  Note that however, $\mathfrak{t}^+$ is isomorphic to $\mathfrak{t}^-$ as algebras via the following map  
\begin{equation} \label{eqn:t+}
E_{i,-k-1}\mapsto F_{i,k} 
\end{equation}
for all $i=0,\dots,n$ and $k\in\bbZ$.
Using the presentation of $\mathfrak{t}^+$ given in Proposition~\ref{prop:fromE}, one easily verify that the relations hold in $\mathfrak{t}^-$ by comparing with \cite[\S2.3, QL4]{GTL}. 
Moreover, $H_{i,r}\mapsto -H_{i,-r}$ extends the above isomorphism to the Cartan subalgebra.

\subsection{The statement} 
Motivated by the filtration on $Y^+_\hbar(\fg_{\KM})$, we define a filtration $F=\{F_{r}\}_{r\in \N}$ on $\SH^{\sph}$, such that $(\lambda^{(k)})^{r}$ is in the filtered piece $F_r$.
As various filtrations on $\SH^{\sph}$ will be considered, we denote this filtration by $F=\{F_{r}\}_{r\in \N}$ and the corresponding associated graded algebra by $\gr_F(\SH^{\sph})$. That is, the generator $(\lambda^{(k)})^{r}$ is in the filtered piece $F_r$, and $\hbar\in F_0$. 
Recall that the algebra homomorphism in  Theorem \ref{thmfromYZ2} is a filtered map, hence gives rise to a surjective morphism on the associated graded algebras
\[
\gr(\Psi): \gr(Y^+_{\hbar}(\fg_{\KM})) \surj \gr_F(\SH^{\sph}).
\] 
Using the isomorphism $U(\mathfrak{t}^+) \cong U(\tilde{F}^{+})$ and the presentation of the latter in Definition \ref{def:F+}, we have an algebra epimorphism $\pi: U(\mathfrak{t}^+)[\hbar] \cong U(\tilde{F}^{+})[\hbar] \surj \gr(Y^+_{\hbar}(\fg_{\KM})), e_i\otimes t^k\mapsto x_{i, k}^+$, $i=0, 1, \cdots, n$.

\begin{theorem}\label{thm:main}
The composition 
\[
\xymatrix{
U(\mathfrak{t}^+)[\hbar] \ar@{->>}[r]^(0.4){\pi} & \gr(Y^+_{\hbar}(\fg_{\KM})) \ar@{->>}[r]^{\gr(\Psi)}& \gr_F(\SH^{\sph})
}
\]
 is an isomorphism. 
\end{theorem}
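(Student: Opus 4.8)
The plan is to reduce the statement to injectivity of the composite and then to prove injectivity by playing it off against the non-degenerate pairing $(\,,)_{\mathfrak{t}}$ of Proposition~\ref{prop:pairing}. Write $\Phi$ for the composite $\gr(\Psi)\circ\pi$. As $\pi$ and $\gr(\Psi)$ are surjective, so is $\Phi$, and $\Phi(e_i\otimes t^k)=(\lambda^{(i)})^k$. Both $U(\mathfrak{t}^+)[\hbar]$ and $\gr_F(\SH^{\sph})$ are graded by the root lattice $\N^I$ and filtered by $F$, with finite-dimensional pieces in each fixed root degree and $F$-degree, so $\Phi$ is an isomorphism exactly when it is injective. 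First I would note that the leading term of $\fac_{\vec{v_1},\vec{v_2}}$ in the $F$-filtration is $\hbar$-free: the ratios $\tfrac{\lambda^{'i}_s-\lambda^{''i}_t+\hbar}{\lambda^{'i}_s-\lambda^{''i}_t}$ degenerate to $1$ and each factor $(\cdots+m_h\tfrac{\hbar}{2})$ to its linear part. Hence $\gr_F(\SH^{\sph})\cong\overline{\calS}^{\sph}\otimes_{\C}\C[\hbar]$, where $\overline{\calS}^{\sph}$ is the spherical part of the classical shuffle algebra whose factor is the $\hbar$-independent polynomial $\prod_{h\in H}(\lambda^{''\inc(h)}-\lambda^{'\out(h)})(\lambda^{'\inc(h)}-\lambda^{''\out(h)})$ (with the evident index ranges). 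Since source and target are free over $\C[\hbar]$ and $\Phi$ is $\C[\hbar]$-linear of the form $\overline{\Phi}\otimes\mathrm{id}$, it suffices to show the specialization $\overline{\Phi}\colon U(\mathfrak{t}^+)\to\overline{\calS}^{\sph}$ at $\hbar=0$ is an isomorphism.

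To prove $\overline{\Phi}$ injective I would follow the strategy of \cite{E} and \cite{Neg}: construct a pairing $\langle\,,\rangle\colon\overline{\calS}^{\sph}\times U(\mathfrak{t}^-)\to\C$ defined \emph{intrinsically} on the shuffle side (by an Enriquez-type multivariable residue/evaluation formula on symmetric functions, and not by the residue pairing $(\,,)_{\SH}$ used to build the Drinfeld double), and then establish the compatibility $\langle\overline{\Phi}(a),b\rangle=(a,b)_{\mathfrak{t}}$ for all $a\in U(\mathfrak{t}^+)$, $b\in U(\mathfrak{t}^-)$. On generators this reads $\langle(\lambda^{(i)})^r,x^-_{j,s}\rangle=\delta_{ij}\delta_{r,s}$, matching $(x^+_{i,r},x^-_{j,s})_{\mathfrak{t}}=\delta_{ij}\delta_{r,s}$ from \eqref{eqn:toroidal}. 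The general identity would then propagate from the generators through the coproducts, since $\overline{\Phi}$ is a bialgebra map and both $\langle\,,\rangle$ and $(\,,)_{\mathfrak{t}}$ are bialgebra pairings.

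Granting this compatibility, injectivity is immediate: if $\overline{\Phi}(a)=0$ then $(a,b)_{\mathfrak{t}}=\langle\overline{\Phi}(a),b\rangle=0$ for every $b\in U(\mathfrak{t}^-)$, whence $a=0$ by non-degeneracy of $(\,,)_{\mathfrak{t}}$. Here the hypotheses that $A$ is affine and not of type $A_1^{(1)}$ enter through Proposition~\ref{prop:fromE}, which guarantees $U(\mathfrak{t}^+)\cong U(\tilde{F}^+)$ and realizes $\mathfrak{t}^+$ as exactly the central extension dual to $\mathfrak{t}^-$. Thus $\overline{\Phi}$, a surjection with trivial kernel, is an isomorphism, and therefore so is $\Phi$.

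The main obstacle is the construction of the intrinsic pairing $\langle\,,\rangle$ together with the verification of the compatibility identity; equivalently, the adaptation of Enriquez's duality argument from the loop algebra $\fn_{\KM}[u,u^{-1}]$ of \cite{E} to the current algebra $\fn_{\KM}[u]$, and the identification of the COHA factor above with the kernel governing Enriquez's shuffle realization. The delicate points will be checking that $\langle\,,\rangle$ is well defined and bialgebra-compatible on the whole spherical algebra (rather than merely on generators), and keeping track of the central summands $\bigoplus_{k>0,\,l\in\N}\C K_{k\delta}[l]$ of $\mathfrak{t}^+$ so that they are matched correctly under $\overline{\Phi}$; these are precisely the places where the affine, non-$A_1^{(1)}$, hypothesis is needed.
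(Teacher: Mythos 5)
Your overall strategy---degenerate the problem to $\hbar=0$ and then settle the classical case by an Enriquez-style duality argument---is the same as the paper's, but the first stage contains a genuine gap. You assert that $\gr_F(\SH^{\sph})\cong\overline{\calS}^{\sph}\otimes_\C\C[\hbar]$ because "the leading term of $\fac_{\vec{v_1},\vec{v_2}}$ in the $F$-filtration is $\hbar$-free." The filtration $F$ is defined by generation: $F_r$ is spanned by products of generators whose degrees sum to at most $r$. Since $\fac_{\vec{v_1},\vec{v_2}}$ has positive polynomial degree in the $\lambda$'s, an element of $F_r$ in dimension $\vec{v}$ generally has $\lambda$-degree strictly larger than $r$, so "$F$-degree" and "$\lambda$-degree" do not coincide, and it is not clear that "take the top $\lambda$-degree part and set $\hbar=0$" descends to a well-defined isomorphism $F_r/F_{r-1}\to\overline{\calS}^{\sph}$, nor that $\gr_F(\SH^{\sph})$ is free over $\C[\hbar]$ (which you also need to write $\Phi=\overline{\Phi}\otimes\mathrm{id}$). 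This identification is precisely where the paper does its real work: it introduces a third, honest grading $G$ by \emph{total} degree in the $\lambda$'s and $\hbar$ (which is multiplicative because the localized kernel in \eqref{eq:fac'} is homogeneous of degree $0$, via Proposition~\ref{prop:fac'}), proves the exchange identity $F_r\cap G_m=F'_{r-m}\cap G_m$ relating $F$ to the $\hbar$-adic filtration $F'$, and then uses flatness of $\SH$ over $\C[\hbar]$ to get $\gr_{\hbar}(\SH^{\sph})\cong\SH^{\sph}|_{\hbar=0}[\hbar]$ (Propositions~\ref{prop2} and \ref{prop3}); the authors explicitly flag the flatness step as "the key to the proof of the PBW theorem." Your one-line degeneration argument skips all of this.

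The second stage of your plan essentially reproduces the paper's Step 1 (Proposition~\ref{prop: hbar=0}), but you list as the "main obstacle" the construction of an intrinsic pairing on the shuffle side. No new construction is needed: the paper uses the residue pairing $(\,,)_{\SH}$ of \S\ref{sec:pair2} (from \cite[Theorem 3.5]{YZ2}), which is already a bialgebra pairing, remains non-degenerate at $\hbar=0$, and on generators gives $((\lambda^{(i)})^{r},(-1)^s(\lambda^{(j)})^{s})=\delta_{ij}\delta_{r+s,-1}$. Note this is $\delta_{r+s,-1}$, not the $\delta_{r,s}$ you write; the paper reconciles the two normalizations by transporting $(\,,)_{\mathfrak{t}^+\times\mathfrak{t}^-}$ through the re-indexing isomorphism \eqref{eqn:t+}, $x^-_{i,k}\leftrightarrow x^+_{i,-k-1}$, before comparing. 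With that bookkeeping fixed, the propagation from generators via the coproducts and the injectivity-from-non-degeneracy conclusion go through exactly as you describe.
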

We will prove Theorem \ref{thm:main} in \S~\ref{sec:positive}. 
By Theorem \ref{thmfromYZ2}, $\Psi$ is an epimorphism.
It follows from this theorem that  $\gr(\Psi)$ is an isomorphism, and consequently that 
$\Psi: Y^+_{\hbar}(\fg_{\KM})\cong \SH^{\sph}$ is an isomorphism.

\section{Proof of Theorem \ref{thm:main}}
\label{sec:positive}
We prove Theorem~\ref{thm:main} in the following steps.
\subsection{Step 1}
Note that $Y^+_{\hbar}(\fg_{\KM})|_{\hbar=0}\cong U(\mathfrak{t}^+)$ by Proposition \ref{prop:fromE}. 
In this step, we show the following. 
\begin{prop}\label{prop: hbar=0}
The map $\Psi|_{\hbar=0}: U(\mathfrak{t}^+)\surj \SH^{\sph}|_{\hbar=0}$ is an algebra isomorphism.  
\end{prop}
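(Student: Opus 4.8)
The plan is to upgrade the surjection $\Psi^+:=\Psi|_{\hbar=0}\colon U(\mathfrak{t}^+)\surj \SH^{\sph}|_{\hbar=0}$ to an isomorphism by showing it preserves the two bialgebra pairings in play and then invoking the non-degeneracy of the pairing on the source. Alongside $\Psi^+$ I would use the negative counterpart $\Psi^-\colon U(\mathfrak{t}^-)\to (\SH^{\sph,\coop})|_{\hbar=0}$ furnished by \eqref{eqn:coop} together with the algebra isomorphism $\mathfrak{t}^+\cong\mathfrak{t}^-$ of \eqref{eqn:t+}; on the generators it reads $x^-_{i,s}\mapsto(-\lambda^{(i)})^s$. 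The crux is the pairing identity
\[
\big(\Psi^+(a),\,\Psi^-(b)\big)_{\SH}\Big|_{\hbar=0}=(a,b)_{\mathfrak{t}}\qquad\text{for all }a\in U(\mathfrak{t}^+),\ b\in U(\mathfrak{t}^-).
\]

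To prove this identity I would reduce it to the generators. By Theorem~\ref{thmfromYZ2} the map $\Psi$ is a morphism of bialgebras, so at $\hbar=0$ both $\Psi^{\pm}$ are coalgebra maps; moreover the spherical generators $(\lambda^{(k)})^r$ sit in the bottom $\bbN^I$-degree $\SH_{\vec{e_k}}$ and are therefore primitive, matching the primitivity of $x^{\pm}_{k,r}$ in $U(\mathfrak{t}^{\pm})$ once the Drinfeld coproduct is specialized at $\hbar=0$. Since $(\,,\,)_{\mathfrak{t}}$ and $(\,,\,)_{\SH}$ are both bialgebra pairings, the Hopf-pairing axioms $(xy,z)=\sum(x,z_{(1)})(y,z_{(2)})$ and $(x,yz)=\sum(x_{(1)},y)(x_{(2)},z)$ propagate the identity from generators to all of $U(\mathfrak{t}^+)$ by induction on the grading. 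On generators it becomes the comparison of \eqref{eqn:toroidal}, namely $(x^+_{i,r},x^-_{j,s})_{\mathfrak{t}}=\delta_{ij}\delta_{r,s}$, with the residue pairing $\big((\lambda^{(i)})^r,(-\lambda^{(j)})^s\big)_{\SH}$ computed from $((\lambda^{(i)})^r,(\lambda^{(j)})^s)_{\SH}=(-1)^s\delta_{ij}\delta_{r+s,-1}$; the mode reversal $E_{i,-k-1}\leftrightarrow F_{i,k}$ built into \eqref{eqn:t+}, together with the "group inverse on the second variable" convention of \cite{YZ2}, is exactly what reconciles the constraint $r+s=-1$ on the shuffle side with $r=s$ on the toroidal side.

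Granting the identity, injectivity is immediate: if $a\in\ker\Psi^+$ then $(a,b)_{\mathfrak{t}}=(\Psi^+a,\Psi^-b)_{\SH}=0$ for every $b\in U(\mathfrak{t}^-)$, so $a$ lies in the left radical of $(\,,\,)_{\mathfrak{t}}$. Restricting Proposition~\ref{prop:pairing} gives a non-degenerate pairing $\mathfrak{t}^+\times\mathfrak{t}^-\to\bbC$, and since $\mathfrak{t}^{\pm}$ are genuine Lie algebras the classical PBW theorem lifts this to a non-degenerate Hopf pairing on $U(\mathfrak{t}^+)\times U(\mathfrak{t}^-)$ — concretely it identifies $U(\mathfrak{t}^+)$ with the graded dual of $U(\mathfrak{t}^-)$. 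Hence $a=0$ and $\Psi^+$ is an isomorphism.

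The step I expect to be the main obstacle is the generator-level verification of the pairing identity: one must match the residue pairing defined in the ad\'ele shuffle algebra with the toroidal pairing $(\,,\,)_{\mathfrak{t}}$ inherited from $\tilde{\mathfrak{tor}}$, correctly threading the index reversal of \eqref{eqn:t+} and the sign conventions so that $\delta_{r+s,-1}$ turns into $\delta_{r,s}$. Once the generators are matched, the Hopf-pairing bookkeeping and the lift of non-degeneracy from the Lie algebra to its enveloping algebra are routine, the latter being purely classical PBW and hence not circular with the theorem under proof.
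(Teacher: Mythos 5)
Your proposal is correct and follows essentially the same route as the paper: both arguments establish that $\Psi|_{\hbar=0}$ intertwines the bialgebra pairings by checking the identity on the primitive generators (using the specialized Drinfeld coproduct at $\hbar=0$ and the mode reversal $x^-_{i,k}\leftrightarrow x^+_{i,-k-1}$ to reconcile $\delta_{r-s,0}$ with $\delta_{r+s,-1}$), propagate it via the Hopf-pairing axioms, and deduce injectivity from non-degeneracy of $(\,,\,)_{\mathfrak{t}}$. The only cosmetic difference is that the paper transports the pairing to $U(\mathfrak{t}^+)\times U(\mathfrak{t}^+)$ via \eqref{eqn:t+} before comparing, whereas you keep it on $U(\mathfrak{t}^+)\times U(\mathfrak{t}^-)$; the content is identical.
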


The map  $\Psi|_{\hbar=0}$ is compatible with the coproducts:
\begin{equation}\label{eq2}
\Psi|_{\hbar=0}( \Delta(x))=\Delta(\Psi|_{\hbar=0}(x)), \,\ \text{for $x\in \mathfrak{t}^+$}.
\end{equation}
In \cite[Section 2]{YZ2}, we defined a coproduct 
$\Delta: \SH^{\ext, \sph}\to \SH^{\ext, \sph}\hat{\otimes} \SH^{\ext, \sph}$ on the extended shuffle algebra. 
In particular, by \cite[(8)]{YZ2}, 
\[
\Delta( (\lambda^{(i)})^{r})=H_{i}(\lambda^{(i)})\otimes (\lambda^{(i)})^{r}+(\lambda^{(i)})^{r}\otimes 1.
\] 
When specializing $\hbar=0$, we have on $\SH^{\sph}_{\hbar=0}$, 
$\Delta( (\lambda^{(i)})^{r})=1\otimes (\lambda^{(i)})^{r}+(\lambda^{(i)})^{r}\otimes 1$.
As the coproduct on $U(\mathfrak{t}^+)$ is given by 
$\Delta( x_{i, r}^+)=1\otimes  x_{i, r}^++ x_{i, r}^+\otimes 1$, 
 we have, on the level of generators,
\[
\Psi|_{\hbar=0}( \Delta(x_{i, r}^+))
=\Delta( (\lambda^{(i)})^{r})
=\Delta(\Psi|_{\hbar=0}(x_{i, r}^+)).\]
The claim \eqref{eq2} now follows from the fact that $\Delta$ and $\Psi|_{\hbar=0}$ are algebra homomorphisms. 

We prove Proposition~\ref{prop: hbar=0} using the non-degenerate bialgebra pairings on $U(\mathfrak{t}^+)$ and $\SH^{\sph}|_{\hbar=0}$ described below. The non-degenerate bialgebra pairings on $U(\mathfrak{t}^+)$ and $ \SH^{\sph}|_{\hbar=0}$ are denoted by $(, )_{\mathfrak{t}}$ and $(, )_{\SH_{\hbar=0}}$  respectively.
The paring $(, )_{\SH_{\hbar=0}}$ is recollected in \S~\ref{sec:pair2}. 
We now explain the pairing $(, )_{\mathfrak{t}}$, and the compatibility of the map $\Psi|_{\hbar=0}$ with the parings.

We have the following commutative diagram of algebras
\[\xymatrix@C=4em{
U(\mathfrak{t}^+)^{\coop}\ar[r]^{\Psi^{\coop}|_{\hbar=0}}&\SH_{\hbar=0}^{\sph,\coop}\\
U(\mathfrak{t}^-)^{\coop}\ar[u]_{\cong}&Y^-_{\hbar=0}(\fg)\ar[l]_{\cong} \ar[u]
}\]
where the left vertical map is from \eqref{eqn:toroidal}; the right vertical map is from \cite[\S~4.2]{YZ1} recalled in \eqref{eqn:coop} which in particular is surjective. The bottom isomorphism is similar to Proposition~\ref{prop:fromE}. The map $\Psi^{\coop}$ sends $x_{j, s}^+$ to $(-1)^s(\lambda^{(j)})^{s}$. The commutativity of this diagram is clear by keeping track of the generators of $Y^-_\hbar(\fg)$. 

Identifying $\mathfrak{t}^-$ with $\mathfrak{t}^+$ using the map \eqref{eqn:t+} $x_{i,k}^- \leftrightarrow x^+_{i,-k-1}$, 
we get the pairing $(, )_{\mathfrak{t}}$ on $\mathfrak{t}^+$ using the paring described in  \S~\ref{subsec:3.1}. 
On the level of generators, we have
\[
(x_{i, r}^+, x_{j, s}^+)_{\mathfrak{t}}=
(x_{i, r}^+, x_{j, -1-s}^-)_{\mathfrak{t}^+\times \mathfrak{t}^-}=
\delta_{ij}\delta_{r+s+1, 0}. 
\]
Therefore, 
\begin{equation}\label{eq1}
(x_{i, r}^+, x_{j, s}^+)_{\mathfrak{t}}=
(\Psi|_{\hbar=0}(x_{i, r}^+), \Psi^{\coop}|_{\hbar=0}(x_{j, s}^+))_{\SH_{\hbar=0}}=
((\lambda^{(i)})^{r}, (-1)^s(\lambda^{(j)})^{s})_{\SH_{\hbar=0}}=\delta_{ij}\delta_{r+s, -1}. 
\end{equation}
Moreover, they are bialgebra pairings, i.e., 
\begin{equation}\label{eq3}
(a\star b, c)=(a\otimes b, \Delta(c)), \,\ \text{ for $a,b,c \in A$,}
\end{equation}
where $A=\mathfrak{t}^+$ or $\SH_{\hbar=0}$. 
Therefore, 
it follows from  \eqref{eq2}, \eqref{eq1}, and \eqref{eq3}  that the map $\Psi|_{\hbar=0}$ is compatible with the pairings on the entire $U(\mathfrak{t}^+)$ and $\SH^{\sph}|_{\hbar=0}$.

The injectivity of $\Psi|_{\hbar=0}$ follows from the nondegeneracy of the two pairings when extended to the ad\'ele version and the fact that the map $\Psi|_{\hbar=0}: U(\mathfrak{t}^+) \surj  \SH^{\sph}|_{\hbar=0}$ preserves the pairing. 
Indeed, for any $x\in U(\mathfrak{t}^+)$ such that $\Psi|_{\hbar=0}(x)=0$. We have, for any $y\in U(\mathfrak{t}^+)$,
\[
(x, y)_{\mathfrak{t}}=(\Psi|_{\hbar=0}(x), \Psi^{\coop}|_{\hbar=0}(y))_{\SH_{\hbar=0}}=0.
\] It implies $x=0$ by non-degeneracy. Therefore, $\Psi|_{\hbar=0}$ is injective. This concludes Proposition \ref{prop: hbar=0}. 

\subsection{Step 2}

\Omit{
\yaping{from Wiki}
A filtered algebra is an algebra $(A,\cdot )$ over $k$ which has an increasing sequence 
$\{\{0\}\subseteq F_{0}\subseteq F_{1}\subseteq \cdots \subseteq F_{i}\subseteq \cdots \subseteq A\}$
of subspaces of $A$ such that $ A=\bigcup _{i\in \mathbb {N} }F_{i}$, and that is compatible with the multiplication in the following sense:
$ \forall m,n\in \mathbb {N}, F_{m}\cdot F_{n}\subseteq F_{n+m}.$
}

We define another filtration on $\SH^{\sph}$ by the degrees of $\hbar$. To be more precisely, we assign 
$\deg(\hbar)=-1$, $\deg((\lambda^{(i)})^{r})=0$, and extend it to a filtration on $\SH^{\sph}$. Let $\gr_{\hbar}(\SH)$ be the associated graded of $\SH$ using this $\hbar$-filtration. 
\Omit{
\yaping{There is a problem with the filtration if I set $\deg(\hbar)=1$. In $\gr_{\hbar}(Y)$, I am looking at the top degree term of $\hbar$, and throwing away the lower degree of $\hbar$. If I look at the Yangian relation, I would have in  $\gr_{\hbar}(Y)$
\[
h_{ir} x_{js}+x_{js}h_{ir} =0, \text{and $x_{ir}x_{js}+x_{js}x_{ir}=0$, if $c_{ij}\neq 0$. }
\]
If the above is true, then using $x_{ir}x_{js}-x_{js}x_{ir}=[x_i, x_j]\otimes t^{r+s}$, we conclude 
$2(x_{i}\otimes t^{r})(x_{j}\otimes t^s)=[x_i, x_j]\otimes t^{r+s}$, which can't be true. 
So I change the degree $\hbar=-1$. }}

In this subsection, we show the following. 
\begin{prop}\label{prop2}
We have an isomorphism of algebras
\[
\gr_{\hbar}(\SH) \cong \SH|_{\hbar=0}[\hbar].
\]
\end{prop}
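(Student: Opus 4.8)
The plan is to recognize the $\hbar$-filtration as the $\hbar$-adic filtration and to deduce the statement from the freeness of $\SH$ as a $\C[\hbar]$-module. Since $\deg(\hbar)=-1$ and $\deg((\lambda^{(i)})^r)=0$, the filtered piece in degree $-n$ is exactly $\hbar^n\SH$, so that
\[
\gr_\hbar(\SH)=\bigoplus_{n\geq 0}\hbar^n\SH/\hbar^{n+1}\SH.
\]
The key structural input is that each graded summand $\SH_{\vec{v}}=\C[\hbar]\otimes\C[\lambda^i_s]^{\fS_{\vec{v}}}$ is a \emph{free}, hence $\hbar$-torsion-free, $\C[\hbar]$-module.

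First I would use torsion-freeness to identify the graded pieces as vector spaces. Multiplication by $\hbar^n$ furnishes, for every $n$, an isomorphism $\SH/\hbar\SH\isom\hbar^n\SH/\hbar^{n+1}\SH$; since $\SH/\hbar\SH=\SH|_{\hbar=0}$, this yields a vector-space identification $\gr_\hbar(\SH)\cong\SH|_{\hbar=0}[\hbar]$, with the symbol of $\hbar$ playing the role of the polynomial variable in degree $-1$.

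Next I would check that this identification is multiplicative. The shuffle product of \eqref{shuffle formula} is $\C[\hbar]$-bilinear, and its reduction modulo $\hbar$ is the $\hbar=0$ shuffle product on $\SH|_{\hbar=0}$, because $\fac_{\vec{v_1},\vec{v_2}}=\fac_{\vec{v_1},\vec{v_2}}|_{\hbar=0}+\hbar(\cdots)$ (the diagonal factors reduce to $1$ and the edge factors to their $\hbar=0$ values). Hence for $f\in\hbar^m\SH$ and $g\in\hbar^n\SH$ the symbol of $f\cdot g$ in $\hbar^{m+n}\SH/\hbar^{m+n+1}\SH$ equals $\hbar^{m+n}$ times the $\hbar=0$ product of the reductions of $\hbar^{-m}f$ and $\hbar^{-n}g$, which is precisely the product rule of $\SH|_{\hbar=0}[\hbar]$ with the symbol of $\hbar$ central. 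Assembling the graded pieces then gives the desired algebra isomorphism.

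The computation is essentially formal once freeness is in hand; the only point requiring care -- and the main (if modest) obstacle -- is verifying that no lower-order $\hbar$-corrections contaminate the associated-graded product, i.e.\ that reduction modulo $\hbar$ genuinely intertwines the shuffle product with its $\hbar=0$ specialization. This follows from the $\C[\hbar]$-bilinearity of the product together with the torsion-freeness of $\SH$, which together guarantee that the symbol map is an algebra homomorphism onto $\SH|_{\hbar=0}[\hbar]$ rather than merely a comparison of filtered pieces.
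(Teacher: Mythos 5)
Your proof is correct and follows essentially the same route as the paper: both arguments rest on identifying the $\hbar$-filtration with the $\hbar$-adic filtration, using the freeness (hence $\hbar$-torsion-freeness) of $\SH$ over $\C[\hbar]$ to identify each graded piece with $\SH|_{\hbar=0}$, and observing that the associated-graded product is the reduction of the shuffle product modulo $\hbar$. Your write-up is if anything slightly more explicit than the paper's on the multiplicativity check ($\C[\hbar]$-bilinearity of the product and the behaviour of $\fac_{\vec{v_1},\vec{v_2}}$ under $\hbar=0$), but there is no substantive difference in approach.
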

\begin{proof}
Let $F'=\{F'_r\}_{r\in \bbZ_{\leq 0}}$ be the the filtration induced by $\hbar$, defined as $\deg((\lambda^{(k)})^{r})=0$, and $\deg(\hbar)=-1$. Then, $F'_r$ consists of those elements in $\SH$ whose $\hbar^n$ term satisfies $n\geq -r$. 
Therefore, $F'_r/F'_{r-1}$ consists of those elements in $\SH$ whose $\hbar^n$ term satisfies $n=-r$. 
As a consequence, we have the isomorphism $F'_r/F'_{r-1}\cong (\SH|_{\hbar=0})\hbar^{-r}$. 
We note that this isomorphism, which relies on the fact that $\SH$ is flat as a $\bbC[\hbar]$-module, is the key to the proof of the PBW theorem in the present paper. 

We check that this isomorphism preserves the algebra structure. For any $x, y\in \gr_{\hbar}(\SH)$, without loss of generality, we can assume the leading terms of $x, y$ have degree $0$. 
Otherwise, we can shift $x, y$ by $\hbar^n$, for some $n\in \N$. 
The multiplication $\overline{x\star y}$ in $\gr_{\hbar}(\SH)$ is by throwing away the $\hbar$-terms in $x\star y\in \SH$. 
This is the same as setting $\hbar=0$. Therefore, it is the same as the multiplication in $\SH|_{\hbar=0}[\hbar]$. 
This finishes the proof.
 \end{proof}

\subsection{Step 3}
We now show the following statement. 
\begin{prop}\label{prop3}
We have an isomorphism of algebras
\[
\gr_F(\SH^{\sph})\cong \gr_{\hbar}(\SH^{\sph}). 
\]
\end{prop}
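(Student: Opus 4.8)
The plan is to exhibit a single auxiliary grading on $\SH^{\sph}$ with respect to which both filtrations become transparent, and then to observe that $F$ and the $\hbar$-filtration differ only by a shift that is constant on each homogeneous component.

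First I would introduce on each $\SH_{\vec v}$ the bigrading by $\lambda$-degree $a$ and by the power $b$ of $\hbar$, and set
\[
\widetilde{\deg}\big(\hbar^{b}\mu\big):=a+b-c(\vec v),\qquad c(\vec v):=\sum_{h\in H}v^{\out(h)}v^{\inc(h)},
\]
for a $\lambda$-monomial $\mu$ of degree $a$ in $\SH_{\vec v}$. The key claim is that $\SH^{\sph}$ is graded by $\widetilde\deg$, with $(\lambda^{(k)})^{r}$ in degree $r$ and $\hbar$ in degree $1$. To see this, note that every factor appearing in $\fac_{\vec{v}_1,\vec{v}_2}$ is homogeneous under the simultaneous rescaling $\lambda\mapsto t\lambda$, $\hbar\mapsto t\hbar$: the ratios $\tfrac{\lambda'-\lambda''+\hbar}{\lambda'-\lambda''}$ have degree $0$ and each arrow-linear-factor has degree $1$, so $\fac_{\vec{v}_1,\vec{v}_2}$ is homogeneous of degree $n(\vec{v}_1,\vec{v}_2):=\sum_{h\in H}\big(v_1^{\out(h)}v_2^{\inc(h)}+v_1^{\inc(h)}v_2^{\out(h)}\big)$. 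Since the shuffle product \eqref{shuffle formula} multiplies $f$, $g$ and $\fac_{\vec{v}_1,\vec{v}_2}$ and then symmetrizes, every monomial of $f\star g$ has its $(a+b)$-value equal to $(a_1+b_1)+(a_2+b_2)+n(\vec{v}_1,\vec{v}_2)$; subtracting the identity $c(\vec{v}_1+\vec{v}_2)=c(\vec{v}_1)+c(\vec{v}_2)+n(\vec{v}_1,\vec{v}_2)$ yields $\widetilde\deg(f\star g)=\widetilde\deg(f)+\widetilde\deg(g)$. As $Q$ has no edge-loops, $c(\vec{e}_k)=0$, so $(\lambda^{(k)})^{r}$ indeed sits in degree $r$.

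Next I would rewrite both filtrations in these coordinates. Because $\SH^{\sph}$ is generated over $\C[\hbar]$ by the $(\lambda^{(k)})^{r}$ and is $\widetilde\deg$-homogeneous, one checks that $F_{p}=\bigoplus_{\vec v}\bigoplus_{a-c(\vec v)\le p}\SH_{\vec v}^{(a,b)}$, i.e. the $F$-degree of the bihomogeneous piece $\SH_{\vec v}^{(a,b)}$ is $a-c(\vec v)$, \emph{independent of $b$}; while the $\hbar$-filtration $F'$ used in Proposition~\ref{prop2} assigns degree $-b$. Consequently, on the $\widetilde\deg=N$ homogeneous component $V_N$ one has the identity $\deg_F=\deg_{F'}+N$, so the two filtrations agree up to the constant shift $N$ on each component.

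Finally I would compare the associated graded algebras. Since $\fac_{\vec{v}_1,\vec{v}_2}$ is a polynomial in $\hbar$ with non-negative powers, the shuffle product can only weakly raise the $\hbar$-power, equivalently it weakly lowers $a-c(\vec v)$ within a fixed $\widetilde\deg$-component; hence for both $F$ and $F'$ the leading symbol of a product $f\star g$ is exactly its component of minimal $\hbar$-power. Therefore $\gr_F(\SH^{\sph})$ and $\gr_\hbar(\SH^{\sph})$ have the same underlying space $\bigoplus_{a,b,\vec v}\SH_{\vec v}^{(a,b)}$ and the same multiplication $f\bullet g=$ (minimal-$\hbar$-power component of $f\star g$); they differ only in the grading, $\deg_F=a-c(\vec v)$ versus $\deg_{F'}=-b$, which on each $V_N$ are related by the shift above. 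The identity map of $\SH^{\sph}$, reindexed by $\deg_F\mapsto\deg_F-\widetilde\deg$ on each $V_N$, is then the required algebra isomorphism, where flatness of $\SH^{\sph}$ over $\C[\hbar]$ (already used in Proposition~\ref{prop2}) guarantees the graded pieces do not collapse. The main obstacle is the first step: establishing that $\widetilde\deg$ is a genuine algebra grading with $\hbar$ in degree $1$, which hinges on pinning down the correct shift $c(\vec v)$ and checking homogeneity of the product from the explicit form of $\fac_{\vec{v}_1,\vec{v}_2}$; once this grading is in place, the relation $\deg_F=\deg_{F'}+\widetilde\deg$ and hence the isomorphism are bookkeeping.
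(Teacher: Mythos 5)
Your proof is correct and follows essentially the same route as the paper: your auxiliary grading $\widetilde{\deg}$ is precisely the grading $G$ that the paper imports from the localized shuffle algebra $\SH'$ via Proposition~\ref{prop:fac'} (since $H_{\vec v}$ has total degree $c(\vec v)$, dividing by it implements exactly your shift by $-c(\vec v)$), and your identity $\deg_F=\deg_{F'}+\widetilde{\deg}$ on each homogeneous component is the paper's observation $F_r\cap G_m=F'_{r-m}\cap G_m$. The only caveat is your description of $F_p$ as a direct sum of bihomogeneous pieces $\SH_{\vec v}^{(a,b)}$, which tacitly assumes $\SH^{\sph}$ is closed under separating powers of $\hbar$; this is not needed and is better phrased, as in the paper, as a divisibility-by-$\hbar^{m-r}$ condition within each $\widetilde{\deg}$-homogeneous component.
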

Note that this isomorphism of algebras does not preserve the gradings. 
\begin{proof}
Now we consider 
 a third grading on $\SH^{\sph}$ defined as follows. 
Let $\SH'$ be the localized shuffle algebra considered in \S~\ref{subsec:shuff}. It has a grading defined  by total polynomial degrees in all the variables $\hbar$ and $\lambda_t^{(k)}$. In particular,  $\deg((\lambda^{(k)})^{r})=r$, and $\deg(\hbar)=1$. 
Note that the multiplication formula  \eqref{eq:fac'} makes $\SH'$  into a graded algebra with the degree-$m$ piece denoted by $G_m'$ for $m\in\bbZ$. 
Using the embedding from Proposition~\ref{prop:fac'}, we define the grading on $\SH^{\sph}$ via $G_m=G_m'\cap \SH^{\sph}$. We therefore have $\SH^{\sph}=\bigoplus_{m\in \N} G_m$.

Recall the filtration $F$ on $\SH^{\sph}$ defined by $\deg((\lambda^{(k)})^{r})\leq r$, and $\deg(\hbar)=0$. It induces a filtration on $G_m$ for each $m\in\bbZ$
\[
(F_m\cap G_m) \supseteq (F_{m-1}\cap G_m) \supseteq \cdots \supseteq (F_{0}\cap G_m). \]
Similarly, let $F'=\{F'_r\}_{r\in \bbZ_{\leq 0}}$ be the the filtration induced by $\hbar$, defined as $\deg((\lambda^{(k)})^{r})=0$, and $\deg(\hbar)=-1$.
The filtration $F'$ on $\SH^{\sph}$ induces a filtration on $G_m$:
\[
(F'_0\cap G_m) \supseteq (F'_{-1}\cap G_m) \supseteq \cdots \supseteq (F'_{-m}\cap G_m). \]
Both $F$ and $F'$ are compatible with $G$ in the sense that $F_m=\oplus_i(G_i\cap F_m)$ and $F'_m=\oplus_i(G_i\cap F'_m)$.

We have the following observation, for $0\leq r\leq m$, 
\[
F_{r} \cap G_m =F'_{r-m} \cap G_m. 
\]
Indeed, we have 
\begin{align*}
x\in  F_{r} \cap G_m &\Longleftrightarrow \text{$x$ is of the form $x=\sum_{\{s\mid s\leq r\}} x_s \hbar^{m-s}$, where $\deg(x_s)=s$.}\\
 &\Longleftrightarrow x\in F'_{r-m} \cap G_m, 
\end{align*}
where the last implication follows from the fact that the power of $\hbar$ in $x$ showing up above are all  of the form $m-s \geq m-r$. 
This implies the isomorphism. 
\[
\bigoplus_{r=0}^m (F_{r} \cap G_m)/(F_{r-1} \cap G_m) \cong
\bigoplus_{r=-m}^0 (F'_{r} \cap G_m)/(F_{r+1}' \cap G_m). 
\]
Therefore, we have
\begin{align*}
\gr_F(\SH^{\sph}) 
&=  \bigoplus_{m\geq 0} \bigoplus_{r=0}^m (F_{r} \cap G_m)/(F_{r-1} \cap G_m) \\
&\cong \bigoplus_{m\geq 0}  \bigoplus_{r=-m}^0 (F'_{r} \cap G_m)/(F_{r-1}' \cap G_m)\\
&= \gr_{\hbar}(\SH^{\sph}). 
\end{align*}
The algebra structures on $\gr_F(\SH^{\sph})$ and $ \gr_{\hbar}(\SH^{\sph})$ are both induced from the product on $\SH^{\sph}$. 
Therefore, we conclude the claim. 
\end{proof}
\subsection{Conclusion}
We put Propositions \ref{prop: hbar=0}, \ref{prop2}, \ref{prop3} together. 
The morphism $\gr(\Psi) \circ \pi$ in Theorem \ref{thm:main} is the following composition
\[
U(\mathfrak{t}^+)[\hbar]\cong \SH^{\sph}|_{\hbar=0}[\hbar] \cong  \gr_{\hbar}(\SH^{\sph}) \cong \gr_F(\SH^{\sph}). 
\]
Therefore, $\gr(\Psi) \circ \pi$ is an isomorphism. This completes the proof of Theorem \ref{thm:main}. 

\section{Triangular decomposition}\label{sec:triang}
Recall we have the natural map $\Psi: Y_\hbar(\fg_{\KM})\to \overline{D}(\SH^{\sph,\ext})\cong \SH^{\sph}\otimes\SH^0\otimes\SH^{\sph,\coop}$ from Theorem~\ref{thmfromYZ2}, where the last isomorphism is only of vector spaces. Composing with the natural map $Y^+_\hbar(\fg_{\KM})\to Y_\hbar(\fg_{\KM})$, we get the map 
$Y^+_\hbar(\fg_{\KM})\to \SH^{\sph}\otimes\SH^0\otimes\SH^{\sph,\coop}$, which is injective thanks to Theorem~\ref{thm:main}.
When restricting on $Y^0$, the map $\Psi: Y^0_\hbar(\fg_{\KM}) \to \SH^0$ is an isomorphism by the definitions of both sides. 

In this section, we prove the following theorem.
\begin{theorem}
The map $\Psi: Y_\hbar(\fg_{\KM})\to \SH^{\sph}\otimes\SH^0\otimes\SH^{\sph,\coop}$ is an isomorphism of vector spaces. In particular, the natural map $Y^+_\hbar(\fg_{\KM})\otimes Y^0\otimes Y^-_\hbar(\fg_{\KM})\to  Y_\hbar(\fg_{\KM})$ induced by multiplications is an isomorphism of vector spaces. 
\end{theorem}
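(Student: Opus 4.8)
The plan is to factor the whole map $\Psi$ through the three pieces on which it is already understood, and then to supply the one remaining input---surjectivity of the multiplication map---by a straightening argument. First I would record that $\Psi$ restricts to an isomorphism on each factor of the would-be triangular decomposition. On the positive part $\Psi^{+}:=\Psi|_{Y^{+}_{\hbar}(\fg_{\KM})}\colon Y^{+}_{\hbar}(\fg_{\KM})\to\SH^{\sph}$ is an isomorphism by Theorem~\ref{thm:main}; on the Cartan part $\Psi^{0}\colon Y^{0}\to\SH^{0}$ is an isomorphism by inspection of the two definitions, as already noted. The negative part $\Psi^{-}\colon Y^{-}_{\hbar}(\fg_{\KM})\to\SH^{\sph,\coop}$ I would treat by running the argument of \S\ref{sec:positive} verbatim for the $-$ generators: the surjection \eqref{eqn:coop} plays the role of the positive surjection, the algebra isomorphism $\mathfrak{t}^{+}\cong\mathfrak{t}^{-}$ of \eqref{eqn:t+} lets one transport the $\hbar=0$ statement and the nondegenerate pairing, and the three filtration comparisons (Propositions~\ref{prop: hbar=0}, \ref{prop2}, \ref{prop3}) go through unchanged once the sign conventions are tracked.

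Next I would use that $\Psi$ is an algebra homomorphism. Writing $m_{Y}\colon Y^{+}_{\hbar}(\fg_{\KM})\otimes Y^{0}\otimes Y^{-}_{\hbar}(\fg_{\KM})\to Y_{\hbar}(\fg_{\KM})$ for the multiplication map and $m_{\overline{D}}\colon\SH^{\sph}\otimes\SH^{0}\otimes\SH^{\sph,\coop}\to\overline{D}(\SH^{\sph,\ext})$ for the multiplication in the double, the homomorphism property gives
\[
\Psi\circ m_{Y}=m_{\overline{D}}\circ(\Psi^{+}\otimes\Psi^{0}\otimes\Psi^{-}).
\]
The triangular decomposition of $\overline{D}(\SH^{\sph,\ext})$ recalled in \S\ref{subsec:shuff} says precisely that $m_{\overline{D}}$ is a vector-space isomorphism, while the previous paragraph makes $\Psi^{+}\otimes\Psi^{0}\otimes\Psi^{-}$ an isomorphism; hence $\Psi\circ m_{Y}$ is a vector-space isomorphism. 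In particular $m_{Y}$ is injective.

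It remains to prove that $m_{Y}$ is surjective, i.e. that every element of $Y_{\hbar}(\fg_{\KM})$ is a sum of ordered products $a\,h\,b$ with $a$ a product of positive generators, $h\in Y^{0}$, and $b$ a product of negative generators. I would show the span $V$ of such ordered monomials is stable under left multiplication by each generator. Left multiplication by an $x^{+}$ keeps $a$ inside $Y^{+}$. Left multiplication by a $\xi$ is pushed past the $x^{+}$-block by relation~\ref{Y3}, which rewrites $\xi x^{+}$ as a combination of $x^{+}\xi$. Left multiplication by an $x^{-}$ is carried rightward past the $x^{+}$-block using relation~\ref{Y5}, each commutation either leaving an $x^{+}x^{-}$ pair or producing a Cartan term in $\xi$ (then moved to the middle by~\ref{Y3}), and past the $Y^{0}$-block again by~\ref{Y3}. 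An induction on the number of positive generators, refined by the filtration degree, shows the rewriting terminates, so $V=Y_{\hbar}(\fg_{\KM})$ and $m_{Y}$ is onto.

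Assembling the pieces: $m_{Y}$ is injective by the factorization and surjective by the straightening, hence a vector-space isomorphism, which is the second assertion of the theorem; then $\Psi=(\Psi\circ m_{Y})\circ m_{Y}^{-1}$ is a composite of vector-space isomorphisms, giving the first assertion. I expect the surjectivity of $m_{Y}$ to be the real obstacle: injectivity is essentially formal once the three partial isomorphisms are in hand, but the reordering must be carried out coefficient-by-coefficient out of the generating-series relations~\ref{Y3} and~\ref{Y5}, and one must exhibit an explicit termination measure rather than merely observe that terms get shuffled around; everything else reduces to results already proved in \S\ref{sec:positive} and to the formal factorization above.
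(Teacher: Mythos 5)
Your proposal is correct and follows essentially the same route as the paper: surjectivity of the multiplication map $m_Y$ via straightening with relations \ref{Y3} and \ref{Y5} (the paper cites \cite[Lemma~2.9]{GTL} for this), combined with the fact that $\Psi\circ m_Y$ factors as $\Psi^{+}\otimes\Psi^{0}\otimes\Psi^{-}$ through the triangular decomposition of $\overline{D}(\SH^{\sph,\ext})$ and is therefore an isomorphism by Theorem~\ref{thm:main} and its negative-part analogue. The only presentational difference is the order of the two steps (you extract injectivity of $m_Y$ first, the paper establishes surjectivity first), and your explicit remark that the negative part requires rerunning \S\ref{sec:positive} is a point the paper leaves implicit.
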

This is the triangular decomposition of  the  Yangian.
\begin{proof}
It follows from the commutation relation of $Y^+_\hbar(\fg_{\KM})$ and $ Y^0$ (see, e.g., \cite[Lemma~2.9]{GTL}) that the natural map
$Y^+_\hbar(\fg_{\KM})\otimes Y^0\to Y^{\geq0}_\hbar(\fg_{\KM})$ induced by multiplication is surjective. 
By symmetry, $Y^-_\hbar(\fg_{\KM})\otimes Y^0\to Y^{\leq0}_\hbar(\fg_{\KM})$ is also surjective.
Hence, the commutation relation of $Y^+_\hbar(\fg_{\KM})$ and $Y^-_\hbar(\fg_{\KM})$ \ref{Y5} implies that the natural map $Y^{+}_\hbar(\fg_{\KM})\otimes Y^0\otimes Y^-_\hbar(\fg_{\KM}) \to Y_\hbar(\fg_{\KM})$ is surjective.

Compositing this surjective map with the algebra epimorphism $\Psi:Y_\hbar(\fg_{\KM})\to \overline{D}(\SH^{\sph,\ext})$ from  Theorem~\ref{thmfromYZ2}, we get 
\[
Y^{+}_\hbar(\fg_{\KM})\otimes Y^0\otimes Y^-_\hbar(\fg_{\KM}) \to Y_\hbar(\fg_{\KM})\to \overline{D}(\SH^{\sph,\ext})\cong \SH^{\sph}\otimes\SH^0\otimes\SH^{\sph,\coop}.\]
Theorem~\ref{thm:main} yields that the composition is an 
 isomorphism of vector spaces. 
 
Therefore, $Y^{+}_\hbar(\fg_{\KM})\otimes Y^0\otimes Y^-_\hbar(\fg_{\KM}) \cong Y_\hbar(\fg_{\KM})$ and  $Y_\hbar(\fg_{\KM})\cong \overline{D}(\SH^{\sph,\ext})$. 
 This completes the proof. 

\end{proof}

\newcommand{\arxiv}[1]
{\texttt{\href{http://arxiv.org/abs/#1}{arXiv:#1}}}
\newcommand{\doi}[1]
{\texttt{\href{http://dx.doi.org/#1}{doi:#1}}}
\renewcommand{\MR}[1]
{\href{http://www.ams.org/mathscinet-getitem?mr=#1}{MR#1}}

\end{document}